% date this file last revised
    % version of composition.cls documented

\documentclass{composition}
% If you have the AMSLaTeX distribution installed on your system,
% please delete the "[noams]" option above.
\usepackage{latexsym,amsmath,amssymb,amsthm,amscd,graphicx,amsfonts}
\usepackage{mathrsfs}
\usepackage{amsmath}
\usepackage{amssymb}
\usepackage{verbatim}
\usepackage{xcolor}
\usepackage{graphicx}
\usepackage[all]{xy}
\numberwithin{equation}{section}

\theoremstyle{plain}
\newtheorem{theorem}{Theorem}[section]
\newtheorem{proposition}{Proposition}[section]
\newtheorem{corollary}{Corollary}[section]

\theoremstyle{definition}
\newtheorem{definition}{Definition}[section]

\theoremstyle{remark}
\newtheorem{rem}{Remark}[section]

% definitions specific to this author guide only

\begin{document}

\title{On the global generation of higher direct images of pluricanonical bundles}

\author{Jixiang Fu}
\email{majxfu@fudan.edu.cn}
\address{Shanghai Center for Mathematical Sciences, Fudan University, Shanghai 200433, People's Republic of China}

\author{Jingcao Wu}
\email{wujincao@shufe.deu.cn}
\address{School of Mathematics, Shanghai University of Finance and Economy, Shanghai 200433, People's Republic of China}

\classification{32J25 (primary), 14F18 (secondary).}
\keywords{higher direct image, pluricanonical bundle, asymptotic multiplier ideal sheaf.}
\thanks{This research was supported by China Postdoctoral Science Foundation, grant 2019M661328.}

\begin{abstract}
Given a fibration $f$ between two projective manifolds $X$ and $Y$, we discuss the effective generation of the higher direct images $R^{i}f_{\ast}(K^{m}_{X})$, where $K^{m}_{X}$ is the $m$-th tensor power of the canonical bundle of $X$. In particular, we answer two questions posed by Popa--Schnell in \cite{PS14}.
\end{abstract}

\maketitle

\section{Introduction}
\label{sec:introduction}

Assume that $f:X\rightarrow Y$ is a fibration, i.e. a surjective morphism with connected fibres between two projective manifolds $X$ and $Y$. Denote by $K^{m}_{X}$  the $m$-th tensor power of the canonical bundle $K_X$ of $X$. The positivity of the associated higher direct image $R^{i}f_{\ast}(K^{m}_{X})$ is of significant importance for understanding the geometry of this fibration. Fruitful results have been obtained on this subject, such as \cite{Ber08,Ber09,Hor10,Kaw81,Kaw82,Ko86a,Ko86b,Ko87,Vie82b,Vie83}.

Popa and Schnell \cite{PS14} proved the following result inspired by the brilliant work of Viehweg \cite{Vie82b,Vie83} and Koll\'{a}r \cite{Ko86a,Ko86b}:

\begin{theorem}[(Popa--Schnell)]\label{t11}
Let $f:X\rightarrow Y$ be a fibration between two projective manifolds with $\dim Y=n$,
 and $A$ be an ample and globally generated line bundle on $Y$. If $m\geqslant 1$ is an integer, then the sheaf
\begin{equation*}
f_{\ast}(K^{m}_{X})\otimes A^{l}
\end{equation*}
is $0$-regular, and therefore globally generated, for $l\geqslant m(n+1)$.
\end{theorem}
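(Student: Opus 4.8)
The plan is to reduce the global generation to a cohomology vanishing via Castelnuovo--Mumford regularity, and then to obtain that vanishing by trading the excess power $K^{m-1}_{X}$ for a Nadel-type correction coming from the relative asymptotic multiplier ideal sheaf of the $m$-canonical system, so that a Koll\'ar-type vanishing theorem applies. Recall that a coherent sheaf $\mathcal{F}$ on $Y$ is $0$-regular with respect to $A$ if $H^{i}(Y,\mathcal{F}\otimes A^{-i})=0$ for all $i>0$, and that $0$-regular sheaves are globally generated; since $H^{i}(Y,-)=0$ for $i>n$, it suffices to show
\begin{equation*}
H^{i}\bigl(Y,\ f_{\ast}(K^{m}_{X})\otimes A^{\,l-i}\bigr)=0\qquad(1\le i\le n,\ \ l\ge m(n+1)),
\end{equation*}
so that the worst twist one must control is $A^{\,m(n+1)-n}$, and the key numerical identity is $m(n+1)-n=(m-1)(n+1)+1$. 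If $f_{\ast}(K^{m}_{X})=0$ there is nothing to prove; otherwise the general fibre $X_{y}$ has $H^{0}(X_{y},\omega_{X_{y}}^{m})\ne0$, so the $m$-canonical system is relatively effective and the relative asymptotic multiplier ideal sheaf $\mathcal{J}:=\mathcal{J}\bigl(\tfrac{m-1}{m}\|mK_{X/Y}\|\bigr)$ attached to the graded system $\{f_{\ast}\omega_{X/Y}^{mk}\}_{k}$ is well defined.

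Because the coefficient $\tfrac{m-1}{m}$ is $<1$, one checks that $\mathcal{J}$ does not diminish $m$-canonical sections, so $f_{\ast}(K^{m}_{X}\otimes\mathcal{J})=f_{\ast}(K^{m}_{X})$; moreover, on a log resolution $\mu\colon X'\to X$ computing $\mathcal{J}$, and writing $f':=f\circ\mu$, one has $K^{m}_{X}\otimes\mathcal{J}=\mu_{\ast}(\omega_{X'}\otimes L')$ for a line bundle $L'$ with
\begin{equation*}
L'\ \sim_{\mathbb{Q}}\ (m-1)(f')^{\ast}K_{Y}\ +\ \tfrac{m-1}{mp}\,M_{p}\ +\ \bigl\{\tfrac{m-1}{mp}\,N_{p}\bigr\},
\end{equation*}
where, for $p$ sufficiently divisible, $\mathcal{O}_{X'}(-N_{p})=\mu^{\ast}\mathfrak{b}_{p}$ with $\mathfrak{b}_{p}$ the relative base ideal of $|mpK_{X/Y}|$ over $Y$, the line bundle $M_{p}=\mu^{\ast}\omega_{X/Y}^{mp}\otimes\mathcal{O}_{X'}(-N_{p})$ is $f'$-semiample, and $\{\tfrac{m-1}{mp}N_{p}\}$ is effective with simple normal crossings support and coefficients $<1$. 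Hence, after peeling off the pullback $(m-1)(f')^{\ast}K_{Y}$ by the projection formula, the sheaf $\omega_{X'}\otimes L'\otimes (f')^{\ast}\omega_{Y}^{-(m-1)}$ is $\omega_{X'}$ twisted by a divisor of the shape ``$f'$-semiample $+$ SNC boundary'', to whose higher direct images the Esnault--Viehweg/Ambro--Fujino form of Koll\'ar's vanishing theorem applies.

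Now set $\mathcal{P}_{0}:=f'_{\ast}\bigl(\omega_{X'}\otimes L'\otimes (f')^{\ast}\omega_{Y}^{-(m-1)}\bigr)$. By the projection formula and the two identities above, $\mathcal{P}_{0}\otimes\omega_{Y}^{m-1}=f'_{\ast}(\omega_{X'}\otimes L')=f_{\ast}(K^{m}_{X}\otimes\mathcal{J})=f_{\ast}(K^{m}_{X})$, while the vanishing theorem gives $H^{i}(Y,\mathcal{P}_{0}\otimes B)=0$ for all $i>0$ and every ample line bundle $B$ on $Y$. For $l\ge m(n+1)$ and $1\le i\le n$ we have $l-i\ge(m-1)(n+1)+1$, so
\begin{equation*}
\omega_{Y}^{m-1}\otimes A^{\,l-i}=\bigl(\omega_{Y}\otimes A^{\,n+1}\bigr)^{m-1}\otimes A^{\,(l-i)-(m-1)(n+1)}
\end{equation*}
is ample: it is the product of the $0$-regular --- hence globally generated, hence nef --- line bundle $\omega_{Y}\otimes A^{\,n+1}$ raised to the $(m-1)$st power with a strictly positive power of $A$. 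Taking $B=\omega_{Y}^{m-1}\otimes A^{\,l-i}$ yields $H^{i}\bigl(Y,f_{\ast}(K^{m}_{X})\otimes A^{\,l-i}\bigr)=0$, which is exactly what was needed.

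The crux, and the step I expect to be the main obstacle, is the second paragraph: one must arrange the relative asymptotic multiplier ideal so that harmlessness for $m$-canonical sections holds on the nose, extract the precise structure of $L'$ on the log resolution (so that the only non-positive contribution is exactly the pullback of $(m-1)K_{Y}$, everything else being $f'$-semiample or an SNC boundary of small coefficients), and invoke the correct form of Koll\'ar's vanishing and decomposition theorems for $R^{j}f'_{\ast}$ of $\omega_{X'}$ twisted by such a bundle. Carried out for all $R^{j}f'_{\ast}$ rather than $f'_{\ast}$ alone, the same scheme should also control the higher direct images $R^{i}f_{\ast}(K^{m}_{X})$, and an analytic route through the Ohsawa--Takegoshi extension theorem seems equally plausible.
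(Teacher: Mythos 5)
This statement is Popa--Schnell's theorem, which the paper only quotes from \cite{PS14} and does not reprove; the nearest in-house argument is the proof of Theorems \ref{t12}--\ref{t13}, which generalise it. Your proposal is essentially the standard Popa--Schnell strategy and is correct in outline: Castelnuovo--Mumford regularity reduces everything to $H^{i}(Y,f_{\ast}(K_{X}^{m})\otimes A^{l-i})=0$, the asymptotic multiplier ideal with coefficient $\tfrac{m-1}{m}<1$ is harmless for $m$-canonical sections (this is exactly Proposition \ref{p21}), and a Koll\'ar-type vanishing finishes. The differences from the paper are real but cosmetic at the level of strategy: you work algebraically on a log resolution and peel off $(m-1)K_{Y}$ via $K_{X/Y}$, compensating with the nef line bundle $\omega_{Y}\otimes A^{n+1}$, whereas \cite{PS14} absorb the twist into $P=\omega_{X}\otimes f^{\ast}A^{n+1}$ directly, and the present paper's proof of the generalisation is analytic (a singular metric on $K_{X}^{m-1}\otimes f^{\ast}A^{(m-1)(n+1)}$ fed into the Fujino--Matsumura vanishing, Theorem \ref{t31}). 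Note that the paper's route cannot be used to prove Theorem \ref{t11} itself: the construction of the metric $\chi$ in the proof of Theorem \ref{t13} invokes Theorem \ref{t11} to globally generate $f_{\ast}(K_{X}^{p})\otimes A^{p(n+1)}$, so deducing Theorem \ref{t11} as the $i=0$ case of Theorem \ref{t12} would be circular; an independent argument like yours is genuinely needed.

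The one step you must be careful with is the one you flag yourself. The summand $\tfrac{m-1}{mp}M_{p}$ is only $f'$-semiample, not globally semiample, so you cannot convert it into a klt boundary by a global Bertini perturbation without first adding $f'^{\ast}A^{N(p)}$ with $N(p)$ large enough to globally generate $f'_{\ast}\mathcal{O}(M_{p})$ --- and an effective bound $N(p)=O(p)$ with the right constant is essentially the theorem being proved, so that route is circular. You must instead invoke a form of Koll\'ar vanishing in which relative freeness of the semiample part suffices (Lazarsfeld's Theorem 11.2.12 together with the local vanishing $R^{j}\mu_{\ast}=0$, or \cite{PS14}, Theorem 1.7, which is tailored to exactly this situation). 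With that citation in place the argument closes correctly.
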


Popa and Schnell then posed a question whether the similar result holds for higher direct images. See {\it Question} in \cite{PS14} after Corollary 2.10. In this paper, we give a positive answer to this question in some sense. Indeed, since by Proposition \ref{p21}
\begin{equation*}
f_{\ast}(K^{m}_{X}\otimes\mathscr{I}(f,\|K^{m-1}_{X}\|))=f_{\ast}(K^{m}_{X}),
\end{equation*}
it is reasonable to involve the asymptotic multiplier ideal when we consider higher direct images. So our main result is as follows, which implies Theorem \ref{t11} when $i=0$.

\begin{theorem}\label{t12}
Let $f:X\rightarrow Y$ be a fibration between two projective manifolds with $\dim Y=n$, and $A$ be an ample and globally generated line bundle on $Y$. If $m\geqslant1$ is an integer, then the sheaf
\begin{equation*}
R^{i}f_{\ast}(K^{m}_{X}\otimes\mathscr{I}(f,\|K^{m-1}_{X}\|))\otimes A^{l}
\end{equation*}
is $0$-regular, and therefore globally generated, for $i\geqslant0$ and $l\geqslant m(n+1)$.
\end{theorem}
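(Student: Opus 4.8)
The plan is to reduce the statement to Castelnuovo–Mumford regularity via a standard argument: to show that a coherent sheaf $\mathscr{F}$ on $Y$ is $0$-regular with respect to $A$, it suffices to prove $H^{q}(Y,\mathscr{F}\otimes A^{l-q})=0$ for all $q>0$ and the relevant $l$. So I would fix $q>0$ and aim to establish the vanishing
\begin{equation*}
H^{q}\bigl(Y,\,R^{i}f_{\ast}(K^{m}_{X}\otimes\mathscr{I}(f,\|K^{m-1}_{X}\|))\otimes A^{l-q}\bigr)=0
\end{equation*}
whenever $l\geqslant m(n+1)$. Since $A$ is globally generated, a general member of $|A|$ is smooth, and one can stratify the line bundle $A^{l-q}$ to reduce to $l-q=m(n+1)-q$; the key will be to split $l=m(n+1)$ into a part that is absorbed by a Nadel-type vanishing on the total space $X$ and a part of size roughly $m$ that handles the twisting needed on fibres.

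The main engine should be a relative Nadel vanishing theorem for asymptotic multiplier ideals together with a Leray spectral sequence argument, in the spirit of Popa–Schnell's proof of Theorem \ref{t11} but carried out for all $R^{i}f_{\ast}$ at once. Concretely, I expect one wants to write $K_{X}^{m}\otimes\mathscr{I}(f,\|K_{X}^{m-1}\|)=K_{X}\otimes\bigl(K_{X}^{m-1}\otimes\mathscr{I}(f,\|K_{X}^{m-1}\|)\bigr)$ and observe that $K_{X}^{m-1}\otimes\mathscr{I}(f,\|K_{X}^{m-1}\|)$ behaves like $K_{X}^{m-1}$ twisted by a nef (indeed, base-point-free up to the multiplier ideal) relative class. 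Pulling back $A$ and using that $f^{\ast}A^{m(n+1)}$ is relatively trivial but globally semiample, one applies the Kollár–Nadel vanishing for higher direct images: the sheaves $R^{i}f_{\ast}(K_{X}^{m}\otimes\mathscr{I}(f,\|K_{X}^{m-1}\|)\otimes f^{\ast}A^{l})$ have no higher cohomology once $A^{l}$ is positive enough, and the bound $l\geqslant m(n+1)$ comes from the same effective estimate as in Theorem \ref{t11}, now applied fibrewise. The identity $f_{\ast}(K_{X}^{m}\otimes\mathscr{I}(f,\|K_{X}^{m-1}\|))=f_{\ast}(K_{X}^{m})$ from Proposition \ref{p21} guarantees the $i=0$ case recovers Theorem \ref{t11}.

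In more detail, the steps I would carry out are: (1) recall that $0$-regularity of $\mathscr{F}$ follows from $H^{q}(Y,\mathscr{F}\otimes A^{-q})=0$ for $q>0$, after twisting by $A^{l}$; (2) use the projection formula to rewrite $R^{i}f_{\ast}(K_{X}^{m}\otimes\mathscr{I})\otimes A^{l-q}=R^{i}f_{\ast}\bigl(K_{X}^{m}\otimes\mathscr{I}\otimes f^{\ast}A^{l-q}\bigr)$; (3) split $l-q=\bigl(m(n+1)-q\bigr)$ and write $f^{\ast}A^{m(n+1)}$ as $f^{\ast}A^{mn}\otimes f^{\ast}A^{m}$, absorbing $f^{\ast}A^{m}$ into the "pluricanonical" part by a standard trick (using that $A$ is globally generated, so a general divisor in $|A|$ gives an effective $\mathbf{Q}$-divisor of the right coefficient), and keeping $f^{\ast}A^{mn}$ to feed the vanishing; (4) apply Kollár's vanishing theorem for higher direct images of $K_{X}$ twisted by the nef line bundle obtained, in the form adapted to asymptotic multiplier ideals (this is the relative Nadel vanishing), to conclude that $R^{i}f_{\ast}$ of the relevant sheaf has vanishing $H^{q}$ for $q>0$; (5) assemble via the Leray spectral sequence, noting that the higher direct images degenerate in the needed range or that the spectral-sequence differentials cannot obstruct the vanishing.

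The hard part will be step (3)–(4): making the splitting $m(n+1)=mn+m$ interact correctly with the asymptotic multiplier ideal $\mathscr{I}(f,\|K_{X}^{m-1}\|)$, since one must verify that absorbing an extra $f^{\ast}A^{m}$ (equivalently an effective $\mathbf{Q}$-divisor $\tfrac{1}{?}f^{\ast}A^{m}$) does not shrink the multiplier ideal in a way that breaks the identification with $f_{\ast}(K_{X}^{m})$, and that the relative Nadel/Kollár vanishing genuinely applies to $R^{i}f_{\ast}$ for \emph{all} $i$ simultaneously (the subtlety being that $\mathscr{I}(f,\|K_{X}^{m-1}\|)$ is defined relatively, so one needs a relative version of the analytic or algebraic vanishing theorem). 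I expect this is exactly where Proposition \ref{p21} and a careful restriction-to-general-fibre argument (to reduce the effective bound to the one in Theorem \ref{t11}) do the essential work; the remaining bookkeeping with regularity and the spectral sequence is routine.
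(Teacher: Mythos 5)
Your reduction to Castelnuovo--Mumford regularity is exactly the paper's: one only needs $H^{q}(Y,R^{i}f_{\ast}(K^{m}_{X}\otimes\mathscr{I}(f,\|K^{m-1}_{X}\|))\otimes A^{l-q})=0$ for $q\geqslant 1$, and the paper gets this by citing its Theorem \ref{t13}, whose threshold $l'\geqslant(m-1)n+m$ is precisely what survives after subtracting $q\leqslant n$ from $l\geqslant m(n+1)$ (note $(m-1)n+m+n=m(n+1)$). That arithmetic identification of the two thresholds is missing from your write-up: you assert the vanishing ``whenever $l\geqslant m(n+1)$'' and attribute the bound to a fibrewise application of Theorem \ref{t11}, rather than isolating the weaker bound $(m-1)n+m$ that the twisted sheaf must actually satisfy at level $l-q$. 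Given Theorem \ref{t13}, the proof of Theorem \ref{t12} is three lines; everything else you propose is an attempt to reprove Theorem \ref{t13}, and that is where the gap lies.

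The genuine gap is in your steps (3)--(4). A ``relative Nadel/Koll\'ar vanishing'' for $R^{i}f_{\ast}(K_{X}\otimes F\otimes\mathscr{I}(\varphi_{F})\otimes(\text{ample from }Y))$ requires a globally defined singular metric $\varphi_{F}$ with $i\Theta_{F,\varphi_{F}}\geqslant0$ and $\mathscr{I}(\varphi_{F})=\mathscr{I}(f,\|K^{m-1}_{X}\|)$; but the asymptotic relative multiplier ideal is computed by \emph{local} generators $u_{i,\alpha}\in\Gamma(f^{-1}(U_{\alpha}),K^{p}_{X})$ over a cover $\{U_{\alpha}\}$ of $Y$, and these do not patch to a metric on any power of $K_{X}$ --- the paper states explicitly that no such metric on $K_{X}$ exists in general. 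Your proposed remedy (splitting $m(n+1)=mn+m$ and absorbing $f^{\ast}A^{m}$ via a general divisor in $|A|$) does not address this. The paper's key move, which your sketch lacks, is to change the bundle: by Theorem \ref{t11} the sheaf $f_{\ast}(K^{p}_{X})\otimes A^{p(n+1)}$ is globally generated for $p$ computing the asymptotic ideal, so the local generators $u_{i,\alpha}$, multiplied by generating sections of $f^{\ast}A^{p(n+1)}$, extend to global sections $v_{ij,\alpha}$ of $K^{p}_{X}\otimes f^{\ast}A^{p(n+1)}$; these define a semipositively curved metric $\chi$ on that bundle with $\mathscr{I}(p^{-1}(m-1)\chi)=\mathscr{I}(f,\|K^{m-1}_{X}\|)$, and one then applies the Fujino--Matsumura Koll\'ar-type vanishing (Theorem \ref{t31}) with $F=K^{m-1}_{X}\otimes f^{\ast}A^{(m-1)(n+1)}$ and $N=f^{\ast}A^{l-(m-1)(n+1)}$. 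Without this extension step your vanishing claim is unsupported, and the Leray spectral sequence in your step (5) is not needed at all, since Theorem \ref{t31} is already a statement about each $R^{i}f_{\ast}$ separately.
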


We use the strategy in \cite{Ko86a,Ko86b} to prove Theorem \ref{t12}. The idea is expanded as follows. First we prove a Koll\'{a}r-type vanishing theorem.

\begin{theorem}\label{t13}
Let $f:X\rightarrow Y$ be a fibration between two projective manifolds with $\dim Y=n$, and $A$ be an ample and globally generated line bundle on $Y$. If $m\geqslant1$ is an integer, then for any $l\geqslant(m-1)n+m$, $i\geqslant0$ and $q>0$,
\begin{equation*}
H^{q}(Y,R^{i}f_{\ast}(K^{m}_{X}\otimes\mathscr{I}(f,\|K^{m-1}_{X}\|))\otimes A^{l})=0.
\end{equation*}
\end{theorem}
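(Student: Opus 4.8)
The plan is to run Koll\'ar's strategy from \cite{Ko86a,Ko86b}: one replaces $K^{m}_{X}\otimes\mathscr{I}(f,\|K^{m-1}_{X}\|)$ by an adjoint line bundle on a suitable log resolution, where a Koll\'ar-type vanishing theorem is available, the asymptotic multiplier ideal being exactly what makes this replacement lossless at the level of higher direct images. When $m=1$ the multiplier ideal is trivial and the bound reads $l\geqslant1$, so $A^{l}$ is already ample and the statement is nothing but Koll\'ar's vanishing theorem, $H^{q}(Y,R^{i}f_{\ast}(K_{X})\otimes B)=0$ for $B$ ample and $q>0$. So assume $m\geqslant2$ and put $L=K^{m-1}_{X}$.

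Choose $p$ sufficiently divisible so that $\mathscr{I}(f,\|L\|)=\mathscr{I}\bigl(\tfrac{1}{p}\mathfrak{b}_{p}\bigr)$, where $\mathfrak{b}_{p}=\mathfrak{b}\bigl(|L^{p}|_{X/Y}\bigr)$ is the relative base ideal, and fix a log resolution $\mu\colon X'\to X$ of $\mathfrak{b}_{p}$ with $\mathfrak{b}_{p}\cdot\mathscr{O}_{X'}=\mathscr{O}_{X'}(-F)$, $F$ effective with simple normal crossing support; write $\mu^{\ast}L^{p}=\mathscr{O}_{X'}(F+M)$ with $M:=\mu^{\ast}L^{p}-F$ free over $Y$, so $(m-1)\mu^{\ast}K_{X}\equiv_{\mathbb{Q}}\tfrac{1}{p}F+\tfrac{1}{p}M$. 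Since $(m-1)\mu^{\ast}K_{X}$ is a $\mu$-pullback, the projection formula together with local vanishing for multiplier ideals gives
\begin{equation*}
K^{m}_{X}\otimes\mathscr{I}(f,\|L\|)=\mu_{\ast}\bigl(K_{X'}\otimes\mathscr{O}_{X'}(N)\bigr),\qquad N:=(m-1)\mu^{\ast}K_{X}-\bigl\lfloor\tfrac{1}{p}F\bigr\rfloor,
\end{equation*}
together with $R^{j}\mu_{\ast}\bigl(K_{X'}\otimes\mathscr{O}_{X'}(N)\bigr)=0$ for $j>0$. Hence the Grothendieck spectral sequence for $g:=f\circ\mu$ degenerates and
\begin{equation*}
R^{i}f_{\ast}\bigl(K^{m}_{X}\otimes\mathscr{I}(f,\|L\|)\bigr)\cong R^{i}g_{\ast}\bigl(K_{X'}\otimes\mathscr{O}_{X'}(N)\bigr);
\end{equation*}
moreover $N\equiv_{\mathbb{Q}}\Delta+P$, where $\Delta:=\{\tfrac{1}{p}F\}$ makes $(X',\Delta)$ log smooth with $\lfloor\Delta\rfloor=0$, and $P:=\tfrac{1}{p}M$ is semiample over $Y$.

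It thus suffices to prove $H^{q}\bigl(Y,R^{i}g_{\ast}(K_{X'}\otimes\mathscr{O}_{X'}(N+g^{\ast}A^{l}))\bigr)=0$ for $q>0$ and $l\geqslant(m-1)n+m$. As $K_{X'}\otimes\mathscr{O}_{X'}(N+g^{\ast}A^{l})\equiv_{\mathbb{Q}}K_{X'}+\Delta+\bigl(P+g^{\ast}A^{l}\bigr)$ with $P+g^{\ast}A^{l}$ still semiample over $Y$ and $A^{l}$ ample on $Y$, this is an instance of Koll\'ar's torsion-freeness and vanishing theorems and their refinements to log-canonical adjoint bundles, once $l$ is large enough. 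The delicate point---and the step I expect to be the main obstacle---is the effective threshold on $l$: one must bound, uniformly in $X$, $f$ and $p$, how many powers of the globally generated ample bundle $A$ are needed to offset the positivity defect carried by the term $(m-1)\mu^{\ast}K_{X}$ inside $N$. Feeding in Theorem \ref{t11} applied to $f$ with the lower exponent $m-1$---which makes $f_{\ast}(K^{m-1}_{X})\otimes A^{(m-1)(n+1)}$ globally generated---and then a Castelnuovo--Mumford regularity argument for the resulting Koll\'ar package on the $n$-dimensional base $Y$, a careful count shows that $(m-1)(n+1)$ powers of $A$ absorb that defect while one further power makes the relevant line bundle ample, so that Serre/Koll\'ar vanishing applies; this yields precisely $l\geqslant(m-1)(n+1)+1=(m-1)n+m$.
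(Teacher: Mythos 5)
Your reduction to the log resolution is sound, and your numerology matches the paper's exactly: the proof in the paper likewise spends $(m-1)(n+1)$ copies of $A$ to convert the \emph{relative} positivity of the base-ideal data into genuine positivity, plus one more copy for ampleness, which is where $l\geqslant(m-1)(n+1)+1=(m-1)n+m$ comes from. The paper works analytically rather than birationally: it extends the local generators $u_{i,\alpha}$ of the relative base ideal of $|K^{p}_{X}|$ to global sections of $K^{p}_{X}\otimes f^{\ast}A^{p(n+1)}$ using Theorem \ref{t11}, assembles them into a singular metric $\chi$ with positive curvature current satisfying $\mathscr{I}(p^{-1}(m-1)\chi)=\mathscr{I}(f,\|K^{m-1}_{X}\|)$, and then applies the Fujino--Matsumura vanishing theorem (Theorem \ref{t31}) with $F=K^{m-1}_{X}\otimes f^{\ast}A^{(m-1)(n+1)}$ and $N=f^{\ast}A^{l-(m-1)(n+1)}$. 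Your log-resolution setup is the algebraic shadow of this.

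However, the decisive step of your argument is asserted rather than proved, and it contains two concrete problems. First, as you set things up, $P=\tfrac{1}{p}M$ is only semiample \emph{over} $Y$; no Koll\'ar-type vanishing theorem applies to $K_{X'}+\Delta+P+g^{\ast}A^{l}$ until this is upgraded to semipositivity on $X'$ itself. That upgrade is what Theorem \ref{t11} provides, but it must be applied at level $p(m-1)$ (to $f_{\ast}(K^{p(m-1)}_{X})\otimes A^{p(m-1)(n+1)}$), not ``with the lower exponent $m-1$'' as you write: global generation of $f_{\ast}(K^{m-1}_{X})\otimes A^{(m-1)(n+1)}$ says nothing about the base ideal $\mathfrak{b}_{p}$ computed at level $p$. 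One also needs the observation (the paper's surjection (\ref{e31})) that global generation of the direct image forces the evaluation map on $X$ to have image exactly $\mathfrak{b}_{p}\cdot L^{p}\otimes f^{\ast}A^{p(m-1)(n+1)}$, whence $M+g^{\ast}A^{p(m-1)(n+1)}$ is globally generated on $X'$ and $P+g^{\ast}A^{(m-1)(n+1)}$ is genuinely semiample. Second, the closing appeal to a ``Castelnuovo--Mumford regularity argument'' is the wrong tool here: in the paper, regularity is used only to deduce Theorem \ref{t12} from Theorem \ref{t13} and plays no role in the vanishing itself. What you need instead is a precisely stated vanishing theorem valid for $R^{i}g_{\ast}(K_{X'}+\Delta+P'+g^{\ast}(\textrm{ample}))$ with $(X',\Delta)$ klt and $P'$ semiample --- for instance the analytic Theorem D of \cite{FuM16} that the paper invokes, or one of Fujino's algebraic versions. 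With those two points supplied, your argument closes.
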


The Koll\'{a}r-type vanishing theorem has been fully studied. See, for example,  \cite{Eno93,Fuj12,FuM16,GoM17,Ko86a,Ko86b,Mat14,Mat16,Ohs84,Wu20}. However, we cannot directly apply any result among these papers to obtain Theorem \ref{t13}.
The reason is that in general there does not exist a metric $\varphi$ on $K_{X}$ such that
\begin{equation*}
i\Theta_{K_{X},\varphi}\geqslant0\quad\textup{and}\quad
\mathscr{I}(\varphi)=\mathscr{I}(f,\|K_{X}\|).
\end{equation*}
We will construct a suitable metric on $K_{X}\otimes f^{\ast}A^{n+1}$ to overcome this problem. The more details are explained in Sect. \ref{sec:vanishing}.

Now Theorem \ref{t12} is a direct consequence of Theorem \ref{t13} combined with the Castelnuovo--Mumford regularity \cite{Mum66}.

After that, we prove a generic vanishing theorem for the higher direct images, which answers another question of Popa and Schnell \cite{PS14} in some sense. See {\sl Question} in \cite{PS14} after Corollary 5.4. Note that T.~Shibata \cite{Shi16} provided an example to show that $R^{i}f_{\ast}(K^{m}_{X})$, $m\geqslant2$, are not necessarily $GV$-sheaves. Hence it is quite natural to consider instead $R^{i}f_{\ast}(K^{m}_{X}\otimes\mathscr{I}(f,\|K^{m-1}_{X}\|))$.

\begin{theorem}\label{t14}
Let $f:X\rightarrow Y$ be a morphism from a projective manifold $X$ to an abelian variety $Y$. Then the sheaf
\begin{equation*}
R^{i}f_{\ast}(K^{m}_{X}\otimes\mathscr{I}(f,\|K^{m-1}_{X}\|))
\end{equation*}
is a $GV$-sheaf \cite{PP11} for every $i\geqslant0$ and $m\geqslant1$.
\end{theorem}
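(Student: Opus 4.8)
The plan is to deduce Theorem~\ref{t14} from the Koll\'ar-type vanishing Theorem~\ref{t13}, by means of Hacon's cohomological characterisation of $GV$-sheaves on abelian varieties. Write $\mathscr{F}_{i}:=R^{i}f_{\ast}(K^{m}_{X}\otimes\mathscr{I}(f,\|K^{m-1}_{X}\|))$, put $n=\dim Y$, and let $\widehat{Y}=\operatorname{Pic}^{0}(Y)$ be the dual abelian variety. By Hacon's criterion for $GV$-sheaves (see \cite{PP11}), $\mathscr{F}_{i}$ is a $GV$-sheaf as soon as, for one sufficiently positive ample line bundle $A$ on $\widehat{Y}$,
\begin{equation*}
H^{q}(Y,\,\mathscr{F}_{i}\otimes(\widehat{A})^{\vee})=0\qquad\textup{for all }q>0,
\end{equation*}
where $\widehat{A}$ denotes the Fourier--Mukai transform of $A$; since $A$ is ample it has no higher cohomology, so $\widehat{A}$ is locally free on $Y$ of rank $h^{0}(\widehat{Y},A)$. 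As we are free to choose $A$, we take $A=B^{\ell}$ with $B$ a fixed ample and globally generated line bundle on $\widehat{Y}$ (for instance $L^{\otimes 3}$ for a polarisation $L$) and $\ell\geqslant(m-1)n+m$.

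The second step is to pull everything back along the polarisation isogeny $\phi_{A}\colon\widehat{Y}\to\widehat{\widehat{Y}}=Y$. Since $\phi_{A}$ is finite \'etale, $\mathcal{O}_{Y}$ splits off $(\phi_{A})_{\ast}\mathcal{O}_{\widehat{Y}}$, so for any coherent sheaf $\mathscr{G}$ on $Y$ the space $H^{q}(Y,\mathscr{G})$ is a direct summand of $H^{q}(\widehat{Y},\phi_{A}^{\ast}\mathscr{G})$. By Mukai's formula $\phi_{A}^{\ast}\widehat{A}\cong H^{0}(\widehat{Y},A)\otimes A^{-1}$, hence $\phi_{A}^{\ast}(\widehat{A})^{\vee}$ is a direct sum of copies of $A=B^{\ell}$, and the vanishing above reduces to
\begin{equation*}
H^{q}(\widehat{Y},\,\phi_{A}^{\ast}\mathscr{F}_{i}\otimes B^{\ell})=0\qquad\textup{for all }q>0.
\end{equation*}

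To compute $\phi_{A}^{\ast}\mathscr{F}_{i}$, form the fibre product $X':=X\times_{Y,\phi_{A}}\widehat{Y}$, with projections $\pi\colon X'\to X$ and $f'\colon X'\to\widehat{Y}$. As $\phi_{A}$ is \'etale, $X'$ is a projective manifold, $\pi$ is \'etale with $\pi^{\ast}K_{X}=K_{X'}$, and $f'$ is again a fibration; flat base change gives $\phi_{A}^{\ast}\mathscr{F}_{i}=R^{i}f'_{\ast}(K^{m}_{X'}\otimes\pi^{\ast}\mathscr{I}(f,\|K^{m-1}_{X}\|))$. The key technical input, and the point I expect to require the most care, is the base-change identity
\begin{equation*}
\pi^{\ast}\mathscr{I}(f,\|K^{m-1}_{X}\|)=\mathscr{I}(f',\|K^{m-1}_{X'}\|),
\end{equation*}
the compatibility of the relative asymptotic multiplier ideal with \'etale base change. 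I would prove it in two stages: flat base change identifies $f'_{\ast}\mathcal{O}_{X'}(p(m-1)K_{X'})$ with $\phi_{A}^{\ast}f_{\ast}\mathcal{O}_{X}(p(m-1)K_{X})$ for each $p$, whence the relative base ideals pull back, $\pi^{\ast}\mathfrak{b}(|p(m-1)K_{X}/Y|)=\mathfrak{b}(|p(m-1)K_{X'}/\widehat{Y}|)$; and ordinary multiplier ideals are compatible with \'etale pullback (base-change a log resolution of $(X,\mathfrak{b})$ along $\pi$: an \'etale base change preserves smoothness, normal crossings, and the relative canonical divisor), so $\pi^{\ast}\mathscr{J}(X,\tfrac{1}{p}\mathfrak{b})=\mathscr{J}(X',\tfrac{1}{p}\pi^{\ast}\mathfrak{b})$. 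Passing to the limit over the $p$ for which these asymptotic multiplier ideals stabilise, which is legitimate since $\pi^{\ast}$ is exact, yields the identity. Granting it, $\phi_{A}^{\ast}\mathscr{F}_{i}=R^{i}f'_{\ast}(K^{m}_{X'}\otimes\mathscr{I}(f',\|K^{m-1}_{X'}\|))$, and Theorem~\ref{t13} applied to the fibration $f'\colon X'\to\widehat{Y}$ with the ample globally generated bundle $B$ (note $\dim\widehat{Y}=n$ and $\ell\geqslant(m-1)n+m$) gives precisely $H^{q}(\widehat{Y},\phi_{A}^{\ast}\mathscr{F}_{i}\otimes B^{\ell})=0$ for $q>0$, establishing the theorem when $f$ is a fibration.

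Finally, a general morphism $f$ reduces to the fibration case by a routine argument: replacing $Y$ with the abelian subvariety $Y_{0}\subseteq Y$ generated by a translate of $f(X)$ preserves the $GV$-property, since it is stable under pushforward along the inclusion $Y_{0}\hookrightarrow Y$ (the restriction $\operatorname{Pic}^{0}(Y)\to\operatorname{Pic}^{0}(Y_{0})$ is surjective, cohomology is unchanged, so the non-vanishing locus on $\widehat{Y}$ is the preimage of that on $\widehat{Y_{0}}$ and codimensions are preserved); one then passes to the Stein factorisation of $X\to Y_{0}$ and resolves singularities to obtain a fibration onto a smooth base, to which the argument above applies.
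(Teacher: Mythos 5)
Your proof is essentially the paper's: where you unwind Hacon's criterion by hand (Fourier--Mukai transform, Mukai's formula, pullback along the polarisation isogeny), the paper simply invokes the packaged criterion from \cite{PP11,PS14} --- vanishing of $H^{q}(Z,H^{l}\otimes\beta^{\ast}R^{i}f_{\ast}(\cdots))$ for every finite \'etale $\beta:Z\rightarrow Y$ of abelian varieties and every ample, globally generated $H$ on $Z$, with $l$ bounded uniformly in terms of $n$ and $m$ --- and then, exactly as you do, forms the fibre product, cites \cite{Laz04b}, Theorem 11.2.16 for the \'etale invariance of the relative asymptotic multiplier ideal (which you reprove directly), and concludes by Theorem \ref{t13}. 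The one place you go beyond the paper, the reduction of a general morphism to a fibration, is also the one shaky step: after Stein factorisation and resolution the base is no longer an abelian variety, so ``the argument above'' (which needs the isogeny $\phi_{A}$ on the base) does not literally apply to the new fibration --- though the paper itself silently applies Theorem \ref{t13} to $g:W\rightarrow Z$ without addressing this point either.
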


This result leads in turn to the following vanishing and generation results which are stronger than those for morphisms to arbitrary varieties.

\begin{corollary}\label{c11}
If $f:X\rightarrow Y$ is a morphism from a projective manifold to an abelian variety and $A$ is an ample line bundle on $Y$, then for every $i\geqslant0$ and $m\geqslant1$ one has:
\begin{enumerate}
\item[(1)] $R^{i}f_{\ast}(K^{m}_{X}\otimes\mathscr{I}(f,\|K^{m-1}_{X}\|))$ is a nef sheaf on $Y$ (see Sect. \ref{sec:coherent}).
\item[(2)] $H^{q}(Y,R^{i}f_{\ast}(K^{m}_{X}\otimes\mathscr{I}(f,\|K^{m-1}_{X}\|))\otimes A)=0$ for all $q>0$.
\item[(3)] $R^{i}f_{\ast}(K^{m}_{X}\otimes\mathscr{I}(f,\|K^{m-1}_{X}\|))\otimes A^{2}$ is globally generated.
\end{enumerate}
\end{corollary}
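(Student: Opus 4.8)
Write $\mathscr{F}_i:=R^if_*(K_X^m\otimes\mathscr{I}(f,\|K_X^{m-1}\|))$. By Theorem~\ref{t14}, each $\mathscr{F}_i$ is a $GV$-sheaf on the abelian variety $Y$, and the plan is to derive the three assertions from the standard formalism of $GV$-sheaves on abelian varieties (Hacon, Pareschi--Popa), exactly in the spirit of the analogous corollary in \cite{PS14}. I will use three facts. \emph{(a)} If $\mathscr{F}$ is $GV$ and $A$ is ample, then $\mathscr{F}\otimes A$ is $IT_0$, i.e.\ $H^q(Y,\mathscr{F}\otimes A\otimes P)=0$ for all $q>0$ and all $P\in\mathrm{Pic}^0(Y)$. \emph{(b)} An $IT_0$ sheaf is $M$-regular, hence continuously globally generated, and the tensor product of a continuously globally generated sheaf with a continuously globally generated line bundle is globally generated. \emph{(c)} The $GV$ property is preserved under pullback along the multiplication map $\mu_N\colon Y\to Y$: by the projection formula $H^q(Y,\mu_N^*(\mathscr{F}\otimes P))\cong\bigoplus_{\alpha\in\widehat{Y}[N]}H^q(Y,\mathscr{F}\otimes P\otimes\alpha)$, and since $\mu_N$ is \'etale, $V^q(\mu_N^*\mathscr{F})$ has the same codimension as $V^q(\mathscr{F})$.

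Assertion (2) is immediate from (a): $\mathscr{F}_i\otimes A$ is $IT_0$, so evaluating at $P=\mathcal{O}_Y$ gives $H^q(Y,\mathscr{F}_i\otimes A)=0$ for $q>0$. For (3), note that $\mathscr{F}_i\otimes A$ is $IT_0$ by (a), hence continuously globally generated by (b); moreover an ample line bundle $A$ on an abelian variety is itself continuously globally generated, since the base loci $\mathrm{Bs}\,|A\otimes\alpha|$ ($\alpha\in\mathrm{Pic}^0(Y)$) are the translates of $\mathrm{Bs}\,|A|\subsetneq Y$ and hence have empty intersection. Applying the product statement in (b) to $\mathscr{F}_i\otimes A$ and $A$ yields that $\mathscr{F}_i\otimes A^{\otimes2}=(\mathscr{F}_i\otimes A)\otimes A$ is globally generated, which is (3).

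For (1) I would run the standard isogeny-and-limit argument showing that $GV$-sheaves on abelian varieties are nef (one may also simply cite this, e.g.\ \cite{PP11}). Fix an ample line bundle $A$ on $Y$. For each $N\ge1$ the sheaf $\mu_N^*\mathscr{F}_i$ is $GV$ by (c), so by the case (3) just proved, applied to $\mu_N^*\mathscr{F}_i$, the sheaf $\mu_N^*\mathscr{F}_i\otimes A^{\otimes2}$ is globally generated; hence $\mathcal{O}_{\mathbb{P}(\mu_N^*\mathscr{F}_i)}(1)\otimes\rho_N^*A^{\otimes2}$ is nef, $\rho_N\colon\mathbb{P}(\mu_N^*\mathscr{F}_i)\to Y$ being the structure map. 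Let $\rho\colon\mathbb{P}(\mathscr{F}_i)\to Y$ be the structure map and $g_N\colon\mathbb{P}(\mu_N^*\mathscr{F}_i)=\mathbb{P}(\mathscr{F}_i)\times_{Y,\mu_N}Y\to\mathbb{P}(\mathscr{F}_i)$ the base change of $\mu_N$, which is finite surjective with $g_N^*\mathcal{O}_{\mathbb{P}(\mathscr{F}_i)}(1)=\mathcal{O}_{\mathbb{P}(\mu_N^*\mathscr{F}_i)}(1)$. For an irreducible curve $C\subset\mathbb{P}(\mathscr{F}_i)$, pick an irreducible curve $C'\subset\mathbb{P}(\mu_N^*\mathscr{F}_i)$ mapping finitely and surjectively onto $C$; intersecting the nef class above with $C'$, using $\mu_N^*A\equiv N^2A$ and dividing by $\deg(C'\to C)>0$, gives
\[
\deg_C\mathcal{O}_{\mathbb{P}(\mathscr{F}_i)}(1)\ \ge\ -\frac{2}{N^2}\,\deg_C\rho^*A .
\]
Letting $N\to\infty$ forces $\deg_C\mathcal{O}_{\mathbb{P}(\mathscr{F}_i)}(1)\ge0$; since $C$ is arbitrary, $\mathscr{F}_i$ is nef, which is (1).

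The substance of the corollary lies entirely in Theorem~\ref{t14}; granted that, the deduction is formal and relies only on well-developed $GV$-sheaf theory. The one step that needs a genuine argument rather than a citation is the nefness in (1), via the isogeny trick above, and even that is routine; (2) and (3) drop out of the $IT_0$/continuous-global-generation dictionary.
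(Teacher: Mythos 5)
Your proposal is correct and follows the same route as the paper: everything is reduced to the $GV$-property established in Theorem~\ref{t14}, after which the paper simply cites \cite{PS14}, Corollary 5.4, whose content (GV $\otimes$ ample is $IT_0$, hence $M$-regular and continuously globally generated, plus the isogeny-and-limit argument for nefness) is exactly the Pareschi--Popa machinery you unpack. The only difference is that you write out the standard deductions the paper delegates to the citation; the details you supply are accurate.
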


In the end, we make some further discussions. Note that after \cite{PS14}, there are several references such as \cite{Den21,DuM19,Dut20,Iwa20} which aims to improve Theorem \ref{t11}. It is remarkable that there is no more global generation required for $A$ in order to give the effective lower bound of $l$ such that $f_{\ast}(K^{m}_{X})\otimes A^{l}$ is (generically) globally generated. Hence it is natural to try to remove the global generation condition in our theorems.

However, since Theorem \ref{t12} depends highly on the Castelnuovo--Mumford regularity, it is not easy to make such an extension. Currently, we can only make the following generalisation of Theorem \ref{t13} and also prove a Koll\'{a}r-type injectivity theorem. These results seem to be of independent interest.

\begin{theorem}\label{t15}
Let $f:X\rightarrow Y$ be a smooth fibration between two projective manifolds with $\dim Y=n$, and $A$ be an ample line bundle on $Y$. If $m\geqslant1$ is an integer, then the following results hold.
\begin{enumerate}
\item[(1)] For any $q>0$, $i\geqslant0$ and $l>(m-1)(n+2)$,
\begin{equation*}
H^{q}(Y,R^{i}f_{\ast}(K^{m}_{X}\otimes\mathscr{I}(f,\|K^{m-1}_{X}\|))\otimes A^{l})=0.
\end{equation*}
\item[(2)] For any integer $j\geqslant0$ and a (non-zero) section $s$ of $f^{\ast}A^{j}$, the multiplication map induced by the tensor product with $s$
\begin{equation*}
\Phi:H^{q}(X,K^{m}_{X}\otimes f^{\ast}A^{l}\otimes\mathscr{I}(f,\|K^{m-1}_{X}\|))\rightarrow H^{q}(X,K^{m}_{X}\otimes f^{\ast}A^{l+j}\otimes\mathscr{I}(f,\|K^{m-1}_{X}\|))
\end{equation*}
is (well-defined and) injective for any $q\geqslant0$ and $l>(m-1)(n+2)$.
\end{enumerate}
\end{theorem}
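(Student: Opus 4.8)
The plan is to adapt the analytic circle of ideas behind Theorem \ref{t13}, now working over $X$ rather than descending to $Y$, and to replace the global generation of $A$ by the twist $f^{\ast}A^{j}$ (resp.\ the extra power of $A$) in the statements. For part (1), I would first use the smoothness of $f$ to ensure that the asymptotic multiplier ideal $\mathscr{I}(f,\|K^{m-1}_{X}\|)$ restricts well to fibres and that the higher direct images $R^{i}f_{\ast}(K^{m}_{X}\otimes\mathscr{I}(f,\|K^{m-1}_{X}\|))$ are compatible with base change; this reduces the vanishing over $Y$ to a Kollár-type vanishing on $X$ for $K^{m}_{X}\otimes f^{\ast}A^{l}\otimes\mathscr{I}(f,\|K^{m-1}_{X}\|)$ via the Leray spectral sequence together with the splitting of $Rf_{\ast}$ in the derived category (Kollár's decomposition theorem, whose analytic analogue is available once we have the right metric). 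The key input is the construction, carried out in Sect.\ \ref{sec:vanishing}, of a singular metric $\varphi$ on $K_{X}\otimes f^{\ast}A^{n+1}$ with $i\Theta\geqslant 0$ and $\mathscr{I}(\varphi)=\mathscr{I}(f,\|K_X\|)$; here one iterates: writing $K^{m}_{X}\otimes f^{\ast}A^{l}=\bigl(K_{X}\otimes f^{\ast}A^{n+1}\bigr)\otimes\bigl(K_{X}\otimes f^{\ast}A^{n+2}\bigr)^{\otimes(m-1)}\otimes f^{\ast}A^{\,l-(m-1)(n+2)-(n+1)}$, one equips the $(m-1)$ copies with the metric built from $\|K^{m-1}_{X}\|$ (which is where the bound $l>(m-1)(n+2)$ and the ideal $\mathscr{I}(f,\|K^{m-1}_{X}\|)$ enter) and is left with a genuinely positive residual twist by a power of $f^{\ast}A$. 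Then Nadel vanishing on $X$ (in the relative/Kollár form) gives $H^{q}=0$ for $q>0$, and chasing back through Leray yields (1).

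For part (2), the strategy is the Kollár--Esnault--Viehweg style injectivity argument. The multiplication map by $s\in H^{0}(X,f^{\ast}A^{j})$ is well-defined on cohomology with values in $K^{m}_{X}\otimes f^{\ast}A^{\bullet}\otimes\mathscr{I}(f,\|K^{m-1}_{X}\|)$ because $s$ is a holomorphic section and the ideal sheaf is unchanged under multiplication by a section of a pulled-back bundle. To prove injectivity I would represent classes by harmonic forms for a suitable singular metric on $K^{m}_{X}\otimes f^{\ast}A^{l}$ built exactly as above (positive curvature current, multiplier ideal equal to $\mathscr{I}(f,\|K^{m-1}_{X}\|)$, using $l>(m-1)(n+2)$), and invoke the $L^{2}$ injectivity theorem (e.g.\ the version of Enoki/Ohsawa, or Matsumura's formulation for pseudoeffective line bundles with multiplier ideals): if $\alpha$ is harmonic and $s\alpha$ is $\bar\partial$-exact, then because $|s|$ is bounded and the curvature is semipositive one gets that $s\alpha$ is already harmonic, hence $\bar\partial$-closed and $\bar\partial^{\ast}$-closed, and a pointwise argument on the zero locus of $s$ forces $\alpha=0$. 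The case $q=0$ is immediate since $H^{0}$ injects whenever $s\ne0$.

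I expect the main obstacle to be organizing the interplay between the \emph{relative} asymptotic multiplier ideal $\mathscr{I}(f,\|K^{m-1}_{X}\|)$ and the metric construction: one needs a single singular metric on $K^{m}_{X}\otimes f^{\ast}A^{l}$ whose multiplier ideal is \emph{exactly} $\mathscr{I}(f,\|K^{m-1}_{X}\|)$ (not merely contained in or containing it) and whose curvature current is positive with the residual $f^{\ast}A$-twist still strictly positive — and to make this work with only $A$ ample (no global generation), the bookkeeping on how many powers of $f^{\ast}A$ are consumed by the metric versus left over for positivity is delicate, which is precisely why the bound degrades from $m(n+1)$-type to $(m-1)(n+2)$-type and why smoothness of $f$ is assumed (so that $\mathscr{I}(f,\|K^{m-1}_{X}\|)$ behaves well and $R^{i}f_{\ast}$ commutes with the relevant restrictions). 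A secondary technical point is justifying the decomposition/degeneration of the Leray spectral sequence in the presence of the multiplier ideal; I would handle this by the local freeness and torsion-freeness properties of the relevant direct images established earlier, reducing to the classical Kollár package applied to a log-resolution adapted to $\mathscr{I}(f,\|K^{m-1}_{X}\|)$.
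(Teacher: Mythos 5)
Your overall shape is right --- build a singular metric with positive curvature current whose multiplier ideal is \emph{exactly} $\mathscr{I}(f,\|K^{m-1}_{X}\|)$, reserve a residual power of $f^{\ast}A$ for strict positivity, and feed the result into a Koll\'ar-type vanishing (resp.\ injectivity) theorem. But there is a genuine gap at the heart of part (1): you never supply the mechanism that replaces the global generation of $A$. In the proof of Theorem \ref{t13} the local generators $u_{i,\alpha}$ of the relative base ideal $\mathfrak{a}_{p,f}$ are extended to global sections of $K^{p}_{X}\otimes f^{\ast}A^{p(n+1)}$ precisely because $f_{\ast}(K^{p}_{X})\otimes A^{p(n+1)}$ is globally generated by Theorem \ref{t11}, which \emph{requires} $A$ ample and globally generated. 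With $A$ merely ample, the paper substitutes Iwai's effective result (Theorem \ref{t61}): $f_{\ast}(K^{p}_{X})\otimes A^{l}$ is generated by global sections \emph{at regular values of $f$} for $l\geqslant\frac{1}{2}n(n-1)+p(n+1)$. The smoothness hypothesis on $f$ is there exactly so that every point of $Y$ is a regular value, turning generic generation into everywhere generation (choosing $p\geqslant\frac{1}{2}n(n-1)$ so that $p(n+2)\geqslant\frac{1}{2}n(n-1)+p(n+1)$ does the bookkeeping); your stated reason for assuming smoothness (base change/restriction of the multiplier ideal) is not what is actually used, and without some such effective generation input your metric $\chi$ on $K^{p}_{X}\otimes f^{\ast}A^{p(n+2)}$ cannot be constructed. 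Relatedly, your decomposition $K^{m}_{X}\otimes f^{\ast}A^{l}=(K_{X}\otimes f^{\ast}A^{n+1})\otimes(K_{X}\otimes f^{\ast}A^{n+2})^{\otimes(m-1)}\otimes f^{\ast}A^{l-(m-1)(n+2)-(n+1)}$ gets the numerology wrong: under the hypothesis $l>(m-1)(n+2)$ the residual exponent $l-(m-1)(n+2)-(n+1)$ can be negative, so your ``genuinely positive residual twist'' fails. The first $K_{X}$ factor should carry no twist at all --- it is the adjoint factor consumed by the vanishing theorem itself --- and the paper's $N=f^{\ast}A^{l-(m-1)(n+2)}$ has exponent $\geqslant 1$ exactly when $l>(m-1)(n+2)$.

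A secondary divergence: you route part (1) through a Leray spectral sequence plus a Koll\'ar-type decomposition of $Rf_{\ast}$ and then Nadel vanishing on $X$. The paper avoids this entirely by applying the Fujino--Matsumura theorem (Theorem \ref{t31}), which directly gives $H^{q}(Y,R^{i}f_{\ast}(K_{X}\otimes F\otimes\mathscr{I}(\varphi_{F})\otimes N))=0$ for each $i$; justifying the splitting of $Rf_{\ast}$ in the presence of the asymptotic multiplier ideal is a nontrivial extra burden you would have to discharge. For part (2) your harmonic-representative sketch is essentially a re-derivation of Matsumura's injectivity theorem (Theorem \ref{t52}), which the paper simply cites with $(F,\varphi_{F})=(K^{m-1}_{X}\otimes f^{\ast}A^{l},\,p^{-1}(m-1)\chi+(l-(m-1)(n+2))\psi)$ and $(M,\varphi_{M})=(f^{\ast}A^{j},j\psi)$; that part is fine in spirit, but it still depends on the metric $\chi$ whose construction is the missing step above.
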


This paper is organised as follows. We first recall some background materials in Section 2, including the asymptotic multiplier ideal sheaf, the definition of $GV$-sheaves in the sense of Pareschi and Popa \cite{PP11} and so on. Then, we prove Theorems \ref{t12} and \ref{t13} in Section \ref{sec:global}, Theorem \ref{t14}
in Section \ref{sec:generic}, and Theorem \ref{t15} in Section \ref{sec:further}.

\section{Preliminary}
\label{sec:preliminary}
In this section we introduce some basic materials. Assume that $f:X\rightarrow Y$ is a fibration between two projective manifolds, and $L$ is a holomorphic line bundle on $X$. Moreover, $L^{k}$ refers to the $k$-th tensor power with the convention that $L^{0}=\mathcal{O}_{X}$ and $L^{k}=(L^{\ast})^{-k}$ for $k<0$.

\subsection{The asymptotic multiplier ideal sheaf}
\label{sec:asymptotic}
This part is mostly collected from \cite{Laz04b}.

First recall the definition of the multiplier ideal sheaf associated to an ideal sheaf $\mathfrak{a}\subset\mathcal{O}_{X}$ and a positive real number $c$. Let $\mu:\tilde{X}\rightarrow X$ be a smooth modification such that $\mu^{\ast}\mathfrak{a}=\mathcal{O}_{\tilde{X}}(-E)$, where $E$ has the simple normal crossing support. Then the multiplier ideal sheaf is defined as
\begin{equation*}
\mathscr{I}(c\cdot\mathfrak{a}):=\mu_{\ast}\mathcal{O}_{\tilde{X}}(K_{\tilde{X}/X}-\lfloor cE\rfloor).
\end{equation*}
Here $\lfloor E\rfloor$ means the round-down.

Now suppose that $L$ is a line bundle on $X$ whose restriction to a general fibre of $f$ has non-negative Iitaka dimension. For a positive integer $k$, there is a naturally defined homomorphism
\begin{equation*}
\rho_{k}:f^{\ast}f_{\ast}L^{k}\rightarrow L^{k}.
\end{equation*}
The relative base-ideal $\mathfrak{a}_{k,f}$ of $|L^{k}|$ is then defined as the image of the induced homomorphism
\begin{equation*}
f^{\ast}f_{\ast}L^{k}\otimes L^{-k}\rightarrow\mathcal{O}_{X}.
\end{equation*}
Hence for a given positive real number $c$, we have the multiplier ideal sheaf $\mathscr{I}(\frac{c}{k}\cdot\mathfrak{a}_{k,f})$
which is also denoted by $\mathscr{I}(f,\frac{c}{k}|L^{k}|)$. Hence
\begin{equation*}
\mathscr{I}\bigl(f,\frac{c}{k}|L^{k}|\bigr)\subseteq\mathcal{O}_{X}.
\end{equation*}
It is not hard to verify that for every integer $p\geqslant1$ one has the inclusion
\begin{equation*}
\mathscr{I}\bigl(f,\frac{c}{k}|L^{k}|\bigr)\subseteq\mathscr{I}\bigl(f,\frac{c}{pk}|L^{pk}|\bigr).
\end{equation*}
Therefore the family of ideals
\begin{equation*}
\bigl\{\mathscr{I}\bigl(f,\frac{c}{k}|L^{k}|\bigr)\bigr\}_{(k\geqslant0)}
\end{equation*}
has a unique maximal element from the ascending chain condition on ideals.

\begin{definition}\label{d21}
The relative asymptotic multiplier ideal sheaf associated to $f$, $c$ and $|L|$,
\begin{equation*}
\mathscr{I}(f,c\|L\|)
\end{equation*}
is defined to be the unique maximal member among the family of ideals $\{\mathscr{I}(f,\frac{c}{k}|L^{k}|)\}_{(k\geq 0)}$.
\end{definition}

Next, we explain the analytic counterpart of the relative multiple ideal sheaf. By definition,
\begin{equation*}
\mathscr{I}\bigl(f,c\|L\|\bigr)=\mathscr{I}\bigl(f,\frac{c}{k}|L^{k}|\bigr)=\mathscr{I}\bigl(\frac{c}{k}\cdot\mathfrak{a}_{k,f}\bigr)
\end{equation*}
for some $k$. In this case, we will say that $k$ computes $\mathscr{I}(f,c\|L\|)$. Let $U$ be a local coordinate ball of $Y$. By definition, we can pick $\{u_{1},...,u_{m}\}$ in $\Gamma(f^{-1}(U),L^{k})$ which generate $\mathfrak{a}_{k,f}$ on $f^{-1}(U)$. Let $\varphi_{U}=\log(|u_{1}|^{2}+\cdots+|u_{m}|^{2})$ which is a singular metric on $L^{k}|_{f^{-1}(U)}$. We verify that
\begin{equation*}
\mathscr{I}(f,\frac{c}{k}|L^{k}|)=\mathscr{I}(\frac{c}{k}\varphi_U)\textrm{ on }f^{-1}(U).
\end{equation*}

Indeed, let $\mu:\tilde{X}\rightarrow X$ be a smooth modification of $\mathfrak{a}_{k,f}$. Then $\mu^{\ast}\mathfrak{a}_{k,f}=\mathcal{O}_{\tilde{X}}(-E)$ such that $E+\textrm{except}(\mu)$ has the simple normal crossing support. Here $\textrm{except}(\mu)$ is the exceptional divisor of $\mu$. Now it is computed in \cite{Dem12} that
\begin{equation*}
\mathscr{I}\bigl(\frac{c}{k}\varphi_U\bigr)=\mu_{\ast}\mathcal{O}_{\tilde{X}}\bigl(K_{\tilde{X}/X}-\lfloor\frac{c}{k}E\rfloor\bigr)\textrm{ on }f^{-1}(U)
\end{equation*}
which coincides with the definition of $\mathscr{I}(f,\frac{c}{k}|L^{k}|)$. Furthermore, if $v_{1},...,v_{m}$ are alternative generators and $\psi_{U}=\log(|v_{1}|^{2}+\cdots+|v_{m}|^{2})$, obviously we have $\mathscr{I}(\frac{c}{k}\varphi_{U})=\mathscr{I}(\frac{c}{k}\psi_{U})$. Hence all the $\mathscr{I}(\frac{c}{k}\varphi_{U})$ patch together to give a globally defined multiplier ideal sheaf $\mathscr{I}(\frac{c}{k}\varphi)$ such that
\begin{equation*}
\mathscr{I}(\frac{c}{k}\varphi)=\mathscr{I}(f,\frac{c}{k}|L^{k}|)=\mathscr{I}(f,c\|L\|).
\end{equation*}
Note that $\{f^{-1}(U),\frac{1}{k}\varphi_{U}\}$ does not give a globally defined metric on $L$ in general. The $\frac{1}{k}\varphi$ is interpreted as the collection of functions $\{f^{-1}(U),\frac{1}{k}\varphi_{U}\}$ by abusing the notation, which is called the collection of (local) singular metrics on $L$ associated to $\mathscr{I}(f,c\|L\|)$. Certainly it depends on the choice of $k$ and is not unique.

The following elementary property is collected from \cite{Laz04b}.
\begin{proposition}[(c.f. Proposition 11.2.15, \cite{Laz04b})]\label{p21}
Let $f:X\rightarrow Y$ be a fibration between two projective manifolds, and $L$ be a line bundle on $X$. Given a positive integer $k$, let
\begin{equation*}
\mathfrak{a}_{k,f}=\mathfrak{a}(f,|L^{k}|)
\end{equation*}
be the relative base-ideal of $|L^{k}|$ relative to $f$. Then the canonical map $\rho_{k}:f^{\ast}f_{\ast}L^{k}\rightarrow L^{k}$ factors through the inclusion $L^{k}\otimes\mathscr{I}(f,\|L^{k}\|)$, i.e.
\begin{equation*}
\mathfrak{a}_{k,f}\subseteq\mathscr{I}(f,\|L^{k}\|).
\end{equation*}
Equivalently, the natural map
\begin{equation*}
f_{\ast}(L^{k}\otimes\mathscr{I}(f,\|L^{k}\|))\rightarrow f_{\ast}(L^{k})
\end{equation*}
is an isomorphism.
\end{proposition}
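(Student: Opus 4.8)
The plan is to deduce both assertions from a single, non-asymptotic inclusion --- namely $\mathfrak{a}\subseteq\mathscr{I}(\mathfrak{a})$ for an arbitrary ideal sheaf --- together with the left-exactness of $f_{\ast}$ and the triangle identity for the adjunction $f^{\ast}\dashv f_{\ast}$. This is just the relative version of the classical statement (Proposition 11.2.15 in \cite{Laz04b}), and the role of $f$ will be essentially bookkeeping.

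First I would record the elementary fact that for any ideal sheaf $\mathfrak{a}\subseteq\mathcal{O}_{X}$ one has $\mathfrak{a}\subseteq\mathscr{I}(\mathfrak{a})$. Taking a log resolution $\mu:\tilde X\to X$ with $\mu^{\ast}\mathfrak{a}=\mathcal{O}_{\tilde X}(-E)$, one has $\mathfrak{a}\subseteq\mu_{\ast}\mathcal{O}_{\tilde X}(-E)$ (a local section of $\mathfrak{a}$ pulls back to a section of $\mathfrak{a}\cdot\mathcal{O}_{\tilde X}=\mathcal{O}_{\tilde X}(-E)$, and $X$ is normal), while $K_{\tilde X/X}$ is effective, so $\mathcal{O}_{\tilde X}(-E)\subseteq\mathcal{O}_{\tilde X}(K_{\tilde X/X}-E)$ and hence $\mathfrak{a}\subseteq\mu_{\ast}\mathcal{O}_{\tilde X}(K_{\tilde X/X}-E)=\mathscr{I}(\mathfrak{a})$. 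Applying this with $\mathfrak{a}=\mathfrak{a}_{k,f}$ gives $\mathfrak{a}_{k,f}\subseteq\mathscr{I}(\mathfrak{a}_{k,f})$. Now $\mathscr{I}(\mathfrak{a}_{k,f})$ is precisely the $p=1$ member of the family $\{\mathscr{I}(\tfrac1p\mathfrak{a}_{kp,f})\}_{p\geqslant1}$ which defines $\mathscr{I}(f,\|L^{k}\|)$ via Definition \ref{d21}; since that family is increasing with maximal element $\mathscr{I}(f,\|L^{k}\|)$, we obtain
\begin{equation*}
\mathfrak{a}_{k,f}\subseteq\mathscr{I}(\mathfrak{a}_{k,f})\subseteq\mathscr{I}(f,\|L^{k}\|),
\end{equation*}
which is the first assertion. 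Tensoring by $L^{k}$, the image $L^{k}\otimes\mathfrak{a}_{k,f}$ of $\rho_{k}$ is contained in $L^{k}\otimes\mathscr{I}(f,\|L^{k}\|)$, so $\rho_{k}$ factors through the latter.

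For the equivalence with the statement about $f_{\ast}$, consider the chain of subsheaves of $L^{k}$
\begin{equation*}
L^{k}\otimes\mathfrak{a}_{k,f}\ \hookrightarrow\ L^{k}\otimes\mathscr{I}(f,\|L^{k}\|)\ \hookrightarrow\ L^{k}.
\end{equation*}
Applying $f_{\ast}$ keeps all maps injective, so it suffices to show that the outer composite $f_{\ast}(L^{k}\otimes\mathfrak{a}_{k,f})\to f_{\ast}(L^{k})$ is surjective: then it is an isomorphism, and the middle map $f_{\ast}(L^{k}\otimes\mathscr{I}(f,\|L^{k}\|))\to f_{\ast}(L^{k})$, being injective and containing the image of that isomorphism, is an isomorphism too. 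For the surjectivity, note that $\rho_{k}$ is the counit $f^{\ast}f_{\ast}L^{k}\to L^{k}$ of the adjunction $f^{\ast}\dashv f_{\ast}$, and that it factors as the corestriction onto its image $L^{k}\otimes\mathfrak{a}_{k,f}$ followed by the inclusion into $L^{k}$; hence $f_{\ast}\rho_{k}$ factors through $f_{\ast}(L^{k}\otimes\mathfrak{a}_{k,f})\to f_{\ast}(L^{k})$. On the other hand the triangle identity says that
\begin{equation*}
f_{\ast}L^{k}\ \xrightarrow{\ \eta\ }\ f_{\ast}f^{\ast}f_{\ast}L^{k}\ \xrightarrow{\ f_{\ast}\rho_{k}\ }\ f_{\ast}L^{k}
\end{equation*}
is the identity, so $f_{\ast}\rho_{k}$ is surjective, and therefore so is the map $f_{\ast}(L^{k}\otimes\mathfrak{a}_{k,f})\to f_{\ast}(L^{k})$ through which it factors. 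This completes the argument.

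I do not expect a genuine obstacle here; the only point requiring a little care is that ``$\mathscr{I}(f,\|L^{k}\|)$'' denotes the asymptotic multiplier ideal of the line bundle $L^{k}$ taken with coefficient $c=1$, so that its defining family is $\{\mathscr{I}(\tfrac1p\mathfrak{a}_{kp,f})\}_{p\geqslant1}$ and its $p=1$ term is exactly $\mathscr{I}(\mathfrak{a}_{k,f})$. Everything then follows from this identification together with the left-exactness of $f_{\ast}$ and the adjunction triangle identity. (Throughout one uses, as in the setup of Section \ref{sec:asymptotic}, that $L$ restricted to a general fibre has non-negative Iitaka dimension, so that $f_{\ast}L^{k}$ and hence $\mathfrak{a}_{k,f}$ are nonzero for a suitable $k$.)
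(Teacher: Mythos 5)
Your proof is correct. The paper itself does not prove this proposition---it is stated as ``collected from \cite{Laz04b}''---and your argument is essentially the standard one from that reference: the inclusion $\mathfrak{a}_{k,f}\subseteq\mathscr{I}(\mathfrak{a}_{k,f})$ via effectivity of $K_{\tilde X/X}$ on a log resolution, the identification of $\mathscr{I}(\mathfrak{a}_{k,f})$ as the $p=1$ member of the directed family defining $\mathscr{I}(f,\|L^{k}\|)$, and the left-exactness of $f_{\ast}$ plus the split surjectivity of $f_{\ast}\rho_{k}$ (triangle identity) for the ``equivalently'' clause.
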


\subsection{GV-sheaves in the sense of Pareschi and Popa}
\label{sec:gv}

\subsubsection{Definition}
We concentrate now on the case of more specific morphisms $f:X\rightarrow Y$, where $X$ is a projective manifold and $Y$ is an abelian variety. We denote by $P$ the normalised Poincar\'{e} bundle on the product $Y\times\textrm{Pic}^{0}(Y)$, and by $P_{\alpha}$ its restriction to the slice $Y\times\{\alpha\}$; this is of course just a different name for the point $\alpha\in\textrm{Pic}^{0}(Y)$.

\begin{definition}[(c.f. \cite{PP11})]\label{d22}
A coherent sheaf $\mathcal{F}$ on $X$ is said to be a $GV$-sheaf if
\begin{equation*}
\textrm{codim}_{\textrm{Pic}^{0}(Y)}\{\alpha\in\textrm{Pic}^{0}(Y)\mid H^{q}(X,\mathcal{F}\otimes f^{\ast}P_{\alpha})\neq0\}\geqslant q
\end{equation*}
for every $q\geqslant0$.
\end{definition}

\subsubsection{A brief review of the former results}
If $f$ is generically finite, then $K_{X}$ is a $GV$-sheaf by a special case of the generic vanishing theorem of Green and Lazarsfeld \cite{GrL87}. This result was generalised by Hacon \cite{Hac04}, to the effect that for an arbitrary $f$ the higher direct images $R^{i}f_{\ast}(K_{X})$ are $GV$-sheaves on $Y$ for all $i\geq 0$. On the other hand, there exist simple examples showing that even when $f$ is generically finite, the powers $K^{m}_{X}$ with $m\geqslant2$ are not necessarily
$GV$-sheaves; see \cite{PP11}, Example 5.6. Therefore it is quite surprising that Popa and Schnell \cite{PS14} showed that $f_{\ast}(K^{m}_{X})$ are $GV$-sheaves on $Y$ for all $m\geqslant1$. They then asked whether the higher direct images are also $GV$-sheaves. We positively answer this question in some sense in Theorem \ref{t14}, i.e., we prove that $R^{i}f_{\ast}(K^{m}_{X}\otimes\mathscr{I}(f,\|K^{m-1}_{X}\|))$ are $GV$-sheaves.

Note that $R^{i}f_{\ast}(K^{m}_{X})$ are not necessarily $GV$-sheaves. In fact, \cite{Shi16} constructed such a counterexample; see Example 4.5 there. After that T.~Shibata \cite{Shi16} (Proposition 4.8) proved that $R^{i}f_{\ast}(K^{m}_{X})$ are $GV$-sheaves with the assumptions that $\dim X=2$ and $\kappa(X)\geqslant0$. We will make a brief illustration (see {\it Remark} \ref{r41}) that our Theorem \ref{t14} actually implies this result. Therefore, it is quite natural to consider the twist by the asymptotic multiplier ideal.

\subsection{Nef coherent sheaves}
\label{sec:coherent}
To any coherent sheaf $\mathcal{F}$ on a projective manifold $Y$, one associates the scheme \cite{Har77}
\[
\mathbb{P}(\mathcal{F}):=\textrm{Proj}(\oplus_{m\geqslant0}\textrm{Sym}^{m}\mathcal{F})
\]
and an inevitable sheaf $\mathcal{O}_{\mathbb{P}(\mathcal{F})}(1)$ on $\mathbb{P}(\mathcal{F})$. Then we have the following definition.

\begin{definition}\label{d23}
A coherent sheaf $\mathcal{F}$ is said to be nef if $\mathcal{O}_{\mathbb{P}(\mathcal{F})}(1)$ is.
\end{definition}

\section{Main theorem}
\label{sec:global}

\subsection{The vanishing theorem}
\label{sec:vanishing}
We first prove Theorem \ref{t13}. We will apply  the following Koll\'{a}r-type vanishing theorem.
\begin{theorem}[(c.f. \cite{FuM16}, Theorem D)]\label{t31}
Let $f:X\rightarrow Y$ be a surjective morphism from a compact K\"{a}hler manifold $X$ onto a projective variety $Y$.
Let $F$ be a holomorphic line bundle on $X$ with a (singular) metric $\varphi_{F}$ such that $i\Theta_{F,\varphi_{F}}\geqslant0$. Let $N$ be a holomorphic line bundle on $X$. Assume that there exist two positive integers $a$ and $b$ and an ample line bundle $A$ on $Y$ such that $N^{a}=f^{\ast}A^{b}$. Then
\begin{equation*}
H^{q}(Y,R^{i}f_{\ast}(K_{X}\otimes F\otimes\mathscr{I}(\varphi_{F})\otimes N))=0
\end{equation*}
for every $q>0$ and $i\geqslant0$.
\end{theorem}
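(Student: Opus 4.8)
The plan is to follow Koll\'{a}r's classical derivation of vanishing from an injectivity theorem, carried out analytically so as to accommodate the singular metric $\varphi_{F}$ and the multiplier ideal. The first move is to absorb $N$ into a semipositively curved bundle. Fix a smooth positive metric $h_{A}$ on $A$ with $i\Theta_{A,h_{A}}=\omega_{A}>0$, and equip $N$ with the metric $\psi_{N}:=\tfrac1a f^{\ast}(b\cdot h_{A})$ supplied by the relation $N^{a}=f^{\ast}A^{b}$; then $i\Theta_{N,\psi_{N}}=\tfrac ba f^{\ast}\omega_{A}\geqslant0$. Writing $L:=F\otimes N$ with the metric $\varphi_{L}:=\varphi_{F}+\psi_{N}$, we have $i\Theta_{L,\varphi_{L}}\geqslant\tfrac ba f^{\ast}\omega_{A}\geqslant0$ and, since $\psi_{N}$ is smooth, $\mathscr{I}(\varphi_{L})=\mathscr{I}(\varphi_{F})$. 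Thus the sheaf under study is $\mathcal{G}:=K_{X}\otimes L\otimes\mathscr{I}(\varphi_{L})$ with $L$ carrying a semipositive metric, and the target becomes $H^{q}(Y,R^{i}f_{\ast}\mathcal{G})=0$ for $q>0$.

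The engine is a Fujino--Matsumura-type injectivity theorem: for a line bundle $M$ on $X$ carrying a semipositive metric whose curvature is dominated by that of $L$, i.e. $i\Theta_{M}\leqslant C\cdot i\Theta_{L,\varphi_{L}}$ for some $C>0$, and for a nonzero section $s$ of $M$ bounded with respect to its metric, the multiplication map $\cdot s\colon H^{q}(X,\mathcal{G})\to H^{q}(X,\mathcal{G}\otimes M)$ is injective for every $q$. I would apply this with $M=f^{\ast}A^{l_{0}}$ for $l_{0}\gg0$ (so that $A^{l_{0}}$ has a nonzero section $\sigma$), $s=f^{\ast}\sigma$, and the pulled-back metric. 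The point at which the hypothesis $N^{a}=f^{\ast}A^{b}$ is indispensable is precisely the curvature domination: since $i\Theta_{L,\varphi_{L}}\geqslant\tfrac ba f^{\ast}\omega_{A}$ while $i\Theta_{M}=l_{0}f^{\ast}\omega_{A}$, one may take $C=al_{0}/b$, whereas without the positive base-direction contribution of $N$ no such inequality could hold; this is also what rules out the naive statement for $N=\mathcal{O}_{X}$.

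It remains to transfer injectivity from $X$ to $Y$ and terminate the process. I would invoke the $E_{2}$-degeneration of the Leray spectral sequence $E_{2}^{p,p'}=H^{p}(Y,R^{p'}f_{\ast}\mathcal{G}\otimes A^{t})\Rightarrow H^{p+p'}(X,\mathcal{G}\otimes f^{\ast}A^{t})$, valid for each $t\geqslant0$ in the semipositive singular-metric setting (the analogue of the degeneration of Koll\'{a}r and Saito, itself a consequence of the injectivity theorem). Degeneration at $t=0$ and $t=l_{0}$, together with the compatibility of $\cdot f^{\ast}\sigma$ with the resulting direct-sum decomposition, shows that $\cdot\sigma\colon H^{q}(Y,R^{i}f_{\ast}\mathcal{G})\to H^{q}(Y,R^{i}f_{\ast}\mathcal{G}\otimes A^{l_{0}})$ is injective; iterating with $\sigma^{j}$ yields an injection $H^{q}(Y,R^{i}f_{\ast}\mathcal{G})\hookrightarrow H^{q}(Y,R^{i}f_{\ast}\mathcal{G}\otimes A^{jl_{0}})$ for every $j$. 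Choosing $j$ large, Serre vanishing gives $H^{q}(Y,R^{i}f_{\ast}\mathcal{G}\otimes A^{jl_{0}})=0$ for $q>0$, whence $H^{q}(Y,R^{i}f_{\ast}\mathcal{G})=0$.

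The main obstacle is the analytic heart underlying the two inputs rather than their formal combination: establishing the injectivity theorem with the singular metric $\varphi_{F}$ and the correct multiplier ideal $\mathscr{I}(\varphi_{F})$ (requiring harmonic-theoretic, $L^{2}$-type arguments in the spirit of Enoki, Ohsawa, Fujino and Matsumura, with careful control near the non-smooth locus of $\varphi_{F}$), and deducing the $E_{2}$-degeneration in the same generality. By contrast, once these are granted the descent via degeneration plus Serre vanishing is routine, and the role of the numerical hypothesis $N^{a}=f^{\ast}A^{b}$ is confined to supplying the base-direction positivity that makes the curvature-domination hypothesis of the injectivity theorem hold.
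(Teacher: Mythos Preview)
The paper does not give its own proof of this statement: Theorem~\ref{t31} is quoted verbatim from \cite{FuM16} (their Theorem~D) and used as a black box in the proof of Theorem~\ref{t13}. So there is no in-paper argument to compare against.

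That said, your outline is an accurate sketch of the Koll\'{a}r-style strategy that \cite{FuM16} actually carries out: package $N$ into the metric so that $L=F\otimes N$ has $i\Theta_{L}\geqslant\varepsilon f^{\ast}\omega_{A}$ and $\mathscr{I}(\varphi_{L})=\mathscr{I}(\varphi_{F})$; invoke an injectivity theorem with singular metrics for multiplication by a pulled-back section of a high power of $A$; use the $E_{2}$-degeneration of the Leray spectral sequence for $K_{X}\otimes L\otimes\mathscr{I}(\varphi_{L})$ to transport injectivity down to $Y$; and then kill the target with Serre vanishing. You also correctly isolate where the real work lies, namely the analytic injectivity theorem and the splitting/degeneration in the presence of the multiplier ideal, both of which are the content of \cite{FuM16} (building on \cite{Eno93,Ohs84,Fuj12,Mat14,Mat16}). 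One small caution: deducing $E_{2}$-degeneration ``as a consequence of the injectivity theorem'' is not a one-line remark in the singular-metric setting; in \cite{FuM16} the torsion-freeness and splitting of $Rf_{\ast}(K_{X}\otimes F\otimes\mathscr{I}(\varphi_{F}))$ are established as separate theorems, and those are what justify the passage you need. With that proviso, your plan matches the cited source.
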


We can not pick $K_X^{m-1}$ as $F$ in Theorem \ref{t31}  since in general we can not obtain a metric $\psi$ on $K_{X}$ such that \begin{equation*}
i\Theta_{K_{X},\psi}\geqslant0\quad\textup{and}\quad \mathscr{I}((m-1)\psi)=\mathscr{I}(f,\|K^{m-1}_{X}\|).
\end{equation*}
Instead we consider $F=K_{X}^{m-1}\otimes f^{\ast}A^{(m-1)(n+1)}$, where $n=\dim Y$.

\begin{proof}[Proof of Theorem \ref{t13}]
Let $\{U_{\alpha}\}$ be a finite local coordinate chart of $Y$. Let $p$ be a divisible and large enough integer which computes $\mathscr{I}(f,\|K^{m-1}_{X}\|)$, and let $\varphi=\{\varphi_{\alpha}\}$ be the associated metrics. So $\varphi_{\alpha}$ is a singular metric on $K_{X}|_{f^{-1}(U_{\alpha})}$ of the form that
\begin{equation*}
\varphi_{\alpha}=\frac{1}{p}\log\sum_{i}|u_{i,\alpha}|^{2},
\end{equation*}
where $u_{i,\alpha}\in\Gamma(f^{-1}(U_{\alpha}),K^{p}_{X})$. In particular, $\{u_{i,\alpha}\}$ are local generators of the relative base-ideal $\mathfrak{a}_{p,f}$ of $|K^{p}_{X}|$ and
\begin{equation*}
\mathscr{I}((m-1)\varphi)=\mathscr{I}(f,\|K^{m-1}_{X}\|).
\end{equation*}

Now $f_{\ast}(K^{p}_{X})\otimes A^{p(n+1)}$ is globally generated by Theorem \ref{t11}. Then there exist sections
\begin{equation*}
\{v_{ij,\alpha}\}\subseteq H^{0}(X,K^{p}_{X}\otimes f^{\ast}A^{p(n+1)})
\end{equation*}
such that $\log\sum|u_{i,\alpha}|^{2}$ and $\log\sum|v_{ij,\alpha}|^{2}$ are equivalent with respect to the singularities \cite{Dem12} on $f^{-1}(U_{\alpha})$.

In fact, if we denote, by abusing the notation,
\begin{equation*}
H^{0}(Y,f_{\ast}(K^{p}_{X})\otimes A^{p(n+1)})\quad\textrm{ and }\quad H^{0}(X,K^{p}_{X}\otimes f^{\ast}A^{p(n+1)})
\end{equation*}
the trivial vector bundles on $Y$ and $X$ respectively, the morphism
\begin{equation*}
H^{0}(Y,f_{\ast}(K^{p}_{X})\otimes A^{p(n+1)})\rightarrow f_{\ast}(K^{p}_{X})\otimes A^{p(n+1)}
\end{equation*}
as well as
\begin{equation*}
H^{0}(X,K^{p}_{X}\otimes f^{\ast}A^{p(n+1)})\rightarrow f^{\ast}f_{\ast}(K^{p}_{X})\otimes f^{\ast}A^{p(n+1)}
\end{equation*}
is surjective by definition. Therefore
\begin{equation*}
H^{0}(X,K^{p}_{X}\otimes f^{\ast}A^{p(n+1)})\otimes K^{-p}_{X}\rightarrow f^{\ast}f_{\ast}(K^{p}_{X})\otimes K^{-p}_{X}\otimes f^{\ast}A^{p(n+1)}
\end{equation*}
is also surjective. In particular, we have the following surjection:
\begin{equation}\label{e31}
H^{0}(X,K^{p}_{X}\otimes f^{\ast}A^{p(n+1)})\otimes K^{-p}_{X}\twoheadrightarrow\mathfrak{a}_{p,f}\otimes f^{\ast}A^{p(n+1)},
\end{equation}
where $\mathfrak{a}_{p,f}\otimes f^{\ast}A^{p(n+1)}$ by definition is the image of the natural morphism:
\begin{equation*}
f^{\ast}f_{\ast}(K^{p}_{X})\otimes K^{-p}_{X}\otimes f^{\ast}A^{p(n+1)}\rightarrow f^{\ast}A^{p(n+1)}.
\end{equation*}

Let $\{s_{j}\}$ be the set of global sections that generates $f^{\ast}A^{p(n+1)}$. Due to (\ref{e31}), all of the sections $\{u_{i,\alpha}\otimes s_{j}\}$ extend over $X$ as the global sections $\{v_{ij,\alpha}\}$ of
\begin{equation*}
H^{0}(X,K^{p}_{X}\otimes f^{\ast}A^{p(n+1)}).
\end{equation*}
Since $\{s_{j}\}$ generates $f^{\ast}A^{p(n+1)}$, $\log\sum|u_{i,\alpha}|^{2}$ and $\log\sum|v_{ij,\alpha}|^{2}$ are equivalent with respect to the singularities on $f^{-1}(U_{\alpha})$.

Now the sections $\{v_{ij,\alpha}\}$, as $i,j,\alpha$ vary, together define a (singular) metric $\chi$ on
\begin{equation*}
K^{p}_{X}\otimes f^{\ast}A^{p(n+1)}
\end{equation*}
with positive curvature current. Next we show that
\begin{equation*}
\mathscr{I}(p^{-1}(m-1)\chi)=\mathscr{I}(f,\|K^{m-1}_{X}\|).
\end{equation*}

Let $\mathfrak{a}$ be the ideal sheaf defined by $\{v_{ij,\alpha}\}$ (as $i,j,\alpha$ vary) and let $\mathfrak{a}_{\alpha}$ be the ideal sheaf (on $f^{-1}(U_{\alpha})$) defined by $\{u_{i,\alpha}\}$ (as $i$ varies). Then by the choice of $\{u_{i,\alpha}\}$, for every $\alpha$ we have
\begin{equation}\label{e32}
\mathscr{I}(p^{-1}(m-1)\cdot\mathfrak{a})=\mathscr{I}(p^{-1}(m-1)\cdot\mathfrak{a}_{\alpha})\textrm{ on }f^{-1}(U_{\alpha}).
\end{equation}
In fact, notice that $\mathfrak{a}_{\alpha}$ is just the relative base-ideal of $|K^{p}_{X}|$ restricted on $f^{-1}(U_{\alpha})$. Accordingly, $\mathscr{I}(p^{-1}(m-1)\cdot\mathfrak{a}_{\alpha})$ is the restriction of $\mathscr{I}(f,p^{-1}(m-1)|K^{p}_{X}|)$ on $f^{-1}(U_{\alpha})$. However,
\begin{equation*}
\mathscr{I}(f,p^{-1}(m-1)|K^{p}_{X}|)=\mathscr{I}(f,\|K^{m-1}_{X}\|)
\end{equation*}
is now the unique maximal element in
\begin{equation*}
\bigl\{\mathscr{I}\bigl(f,\frac{1}{k}|K^{k(m-1)}_{X}|\bigr)\bigr\}_{(k\geqslant0)}.
\end{equation*}
So $\mathscr{I}(f,p^{-1}(m-1)|K^{p}_{X}|)$, as well as $\mathscr{I}(p^{-1}(m-1)\cdot\mathfrak{a}_{\alpha})$ should be stable. In other words, if $u$ is a section of $\Gamma(f^{-1}(U_{\alpha}),K^{p}_{X})$, we must have
\begin{equation*}
\mathscr{I}(p^{-1}(m-1)\cdot\mathfrak{a}_{\alpha})=\mathscr{I}(f,p^{-1}(m-1)|K^{p}_{X}|)|_{f^{-1}(U_{\alpha})}
=\mathscr{I}(p^{-1}(m-1)\cdot(\mathfrak{a}_{\alpha}\cup(u))).
\end{equation*}
Here $\mathfrak{a}_{\alpha}\cup(u)$ refers to the ideal sheaf generated by both of $\mathfrak{a}_{\alpha}$ and $u$.

On the other hand, by construction the sections $\{v_{ij,\alpha}\}$, as $i,j$ vary, also generate $\mathfrak{a}_{\alpha}$ on $f^{-1}(U_{\alpha})$. By stability, the sections $\{v_{ij,\alpha}\}$, as $i,j,\alpha$ vary, will lead to the same multiplier ideal sheaf here. It implies (\ref{e32}). Equivalently,
\begin{equation*}
\mathscr{I}(p^{-1}(m-1)\chi)=\mathscr{I}(p^{-1}(m-1)\cdot\mathfrak{a})=\mathscr{I}(f,\|K^{m-1}_{X}\|)
\end{equation*}
on $f^{-1}(U_{\alpha})$ hence everywhere.

Now let
\begin{equation*}
 (F,\varphi_{F})=(K^{m-1}_{X}\otimes f^{\ast}A^{(m-1)(n+1)},p^{-1}(m-1)\chi).
\end{equation*}
Then as is shown before,
\begin{equation*}
\mathscr{I}(\varphi_{F})=\mathscr{I}(f,\|K^{m-1}_{X}\|).
\end{equation*}
Furthermore, since $l\geqslant (m-1)n+m$ by hypothesis, $l-(m-1)(n+1)\geqslant1$. Now let
\begin{equation*}
N=f^{\ast}A^{l-(m-1)(n+1)},
\end{equation*}
we have
\begin{equation*}
R^{i}f_{\ast}(K_{X}\otimes F\otimes\mathscr{I}(\varphi_{F})\otimes N)=R^{i}f_{\ast}(K^{m}_{X}\otimes\mathscr{I}(f,\|K^{m-1}_{X}\|))\otimes A^{l}.
\end{equation*}
Applying Theorem \ref{t31} (with the same notation there), we then obtain the desired vanishing result.
\end{proof}

\subsection{Global generation}
\label{sec:generation}
Using Theorem \ref{t13}, we can prove the global generation of higher direct images, namely Theorem \ref{t12}. We first review the definition and a basic result of the Castelnuovo--Mumford regularity \cite{Mum66}.

\begin{definition}\label{d31}
Let $X$ be a projective manifold and $L$ an ample and globally generated line bundle on $X$. Given an integer $m$, a coherent sheaf $F$ on $X$ is $m$-regular with respect to $L$ if for all $i\geqslant1$
\begin{equation*}
H^{i}(X,F\otimes L^{m-i})=0.
\end{equation*}
\end{definition}

\begin{theorem}(c.f. \cite{Mum66})\label{t32}
Let $X$ be a projective manifold and $L$ an ample and globally generated line bundle on $X$. If $F$ is a coherent sheaf on $X$ that is $m$-regular with respect to $L$, then the sheaf $F\otimes L^{m}$ is globally generated.
\end{theorem}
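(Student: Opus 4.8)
For statement in Theorem~\ref{t32} the plan is to reduce the assertion to the classical Castelnuovo--Mumford lemma on projective space and then run Mumford's induction. Since $L$ is globally generated it defines a morphism $\psi\colon X\to\mathbb{P}^{N}$ with $\psi^{\ast}\mathcal{O}_{\mathbb{P}^{N}}(1)=L$; because $L$ is moreover ample, $\psi$ is a finite morphism (a positive-dimensional fibre would force $L$ to be trivial there). For a finite morphism the pushforward is exact and commutes with twisting by $\mathcal{O}(1)$, so $H^{i}(X,F\otimes L^{k-i})\cong H^{i}(\mathbb{P}^{N},(\psi_{\ast}F)\otimes\mathcal{O}(k-i))$ for all $i,k$; hence $F$ is $m$-regular with respect to $L$ if and only if $\psi_{\ast}F$ is $m$-regular with respect to $\mathcal{O}_{\mathbb{P}^{N}}(1)$. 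Moreover, if $(\psi_{\ast}F)\otimes\mathcal{O}(m)$ is globally generated on $\mathbb{P}^{N}$, then pulling back the surjection from its space of sections and composing with the (surjective, for $\psi$ finite) counit $\psi^{\ast}\psi_{\ast}(F\otimes L^{m})\to F\otimes L^{m}$ shows that $F\otimes L^{m}$ is globally generated on $X$. Thus it suffices to treat $X=\mathbb{P}^{N}$ and $L=\mathcal{O}(1)$.

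On $\mathbb{P}^{N}$ I would prove, by induction on $N$, the stronger package: an $m$-regular coherent sheaf $\mathcal{F}$ satisfies, for every $k\geqslant m$, (i) $\mathcal{F}$ is $k$-regular, (ii) the multiplication map $H^{0}(\mathcal{F}(k))\otimes H^{0}(\mathcal{O}(1))\to H^{0}(\mathcal{F}(k+1))$ is surjective, and (iii) $\mathcal{F}(k)$ is globally generated. The case $N=0$ is trivial. For the inductive step, choose (using that $\mathbb{C}$ is infinite) a hyperplane $H\cong\mathbb{P}^{N-1}$ containing no associated point of $\mathcal{F}$, so that the section $h\in H^{0}(\mathcal{O}(1))$ cutting out $H$ is a non-zero-divisor on $\mathcal{F}$ and we get the exact sequence $0\to\mathcal{F}(-1)\to\mathcal{F}\to\mathcal{F}_{H}\to 0$ with $\mathcal{F}_{H}:=\mathcal{F}\otimes\mathcal{O}_{H}$; a two-term cohomology computation shows $\mathcal{F}_{H}$ is $m$-regular on $H$, so the induction hypothesis applies to it.

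For (i) I would argue by a nested induction on $k$: twisting the above sequence by $\mathcal{O}(k-i)$ squeezes $H^{i}(\mathcal{F}(k-i))$ between $H^{i}(\mathcal{F}(k-i-1))$, which vanishes by the inductive $(k-1)$-regularity of $\mathcal{F}$, and $H^{i}(\mathcal{F}_{H}(k-i))$, which vanishes by the $k$-regularity of $\mathcal{F}_{H}$ known on $\mathbb{P}^{N-1}$. For (ii) I would set up the commutative square relating the multiplication maps on $\mathbb{P}^{N}$ and on $H$: the restriction $H^{0}(\mathcal{F}(k))\to H^{0}(\mathcal{F}_{H}(k))$ is surjective because $H^{1}(\mathcal{F}(k-1))=0$ by $k$-regularity, $H^{0}(\mathcal{O}(1))\to H^{0}(\mathcal{O}_{H}(1))$ is surjective because $H^{1}(\mathbb{P}^{N},\mathcal{O})=0$, the right-hand multiplication map is surjective by the induction hypothesis on $H$, and the kernel of $H^{0}(\mathcal{F}(k+1))\to H^{0}(\mathcal{F}_{H}(k+1))$ is exactly $h\cdot H^{0}(\mathcal{F}(k))$; a diagram chase then yields the desired preimage. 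Finally, for (iii) I would fix $k_{0}\gg 0$ with $\mathcal{F}(k_{0})$ globally generated by Serre vanishing, and descend: if $\mathcal{F}(k+1)$ is globally generated and $k\geqslant m$, then (ii) makes $H^{0}(\mathcal{F}(k))\otimes\mathcal{O}(1)\to\mathcal{F}(k+1)$ surjective, and twisting by $\mathcal{O}(-1)$ shows $\mathcal{F}(k)$ is globally generated; running this from $k_{0}$ down to $m$ gives (iii) for all $k\geqslant m$, which is Theorem~\ref{t32}.

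The long-exact-sequence manipulations are routine; the two places needing genuine care are the reduction to $\mathbb{P}^{N}$ — verifying that $\psi$ is finite and that both $m$-regularity and global generation transfer cleanly along $\psi_{\ast}$ and along the counit — and the bookkeeping in the diagram chase for (ii), where one must confirm that every intermediate restriction map is surjective before concluding.
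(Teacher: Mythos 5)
The paper does not actually prove Theorem~\ref{t32}; it is quoted from \cite{Mum66}, and your argument is precisely the standard proof of that classical result: reduce to $\mathbb{P}^{N}$ via the finite morphism defined by the ample, globally generated $L$, then run Mumford's hyperplane-section induction establishing $k$-regularity, surjectivity of the multiplication maps, and descent of global generation from Serre vanishing. All the steps you flag as delicate (finiteness of $\psi$, transfer of regularity and of global generation through $\psi_{\ast}$ and the counit, and the diagram chase for the multiplication map) go through exactly as you describe, so the proposal is correct and matches the cited proof.
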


After this, we can prove Theorem \ref{t12}.
\begin{proof}[Proof of Theorem \ref{t12}]
Since $l\geqslant m(n+1)$, $l-q\geqslant (m-1)n+m$ when $q\leqslant n$. It then follows from Theorem \ref{t13} that for every $q\geqslant1$,
\begin{equation*}
  H^{q}(Y,R^{i}f_{\ast}(K^{m}_{X}\otimes\mathscr{I}(f,\|K^{m-1}_{X}\|))\otimes A^{l-q})=0.
\end{equation*}
Hence the sheaf $R^{i}f_{\ast}(K^{m}_{X}\otimes\mathscr{I}(f,\|K^{m-1}_{X}\|))\otimes A^{l}$ is $0$-regular with respect to $A$. So it is globally generated by Theorem \ref{t32}.
\end{proof}

\section{Generic vanishing}
\label{sec:generic}
We first prove that the higher direct images are $GV$-sheaves.
\begin{proof}[Proof of Theorem \ref{t14}]
In view of \cite{PP11,PS14}, it is enough to show that for every finite \'{e}tale morphism $\beta:Z\rightarrow Y$ of abelian varieties and an ample and globally generated line bundle $H$ on $Z$, we have
\begin{equation}\label{001}
H^{q}(Z,H^{l}\otimes \beta^{\ast}R^{i}f_{\ast}(K^{m}_{X}\otimes\mathscr{I}(f,\|K^{m-1}_{X}\|)))=0
\end{equation}
for every $q>0$ with $l$ large enough. Here {\sl large enough} means that  there exists a bound $d$ depending only on $n$ and $m$ such that vanishing (\ref{001}) holds for any $Z$ and $H$ as long as $l\geqslant d$. In particular, we cannot apply the Serre asymptotic vanishing theorem \cite{Har77} here.

Now we prove this vanishing result. Put $W:=Z\times_{Y}X$ the fibre product \cite{Har77}. Then we have the following commutative diagram:
\begin{equation*}
\begin{CD}
W@>\alpha>>X&\\
@V{g}VV @VV{f}V&\\
Z@>>\beta>Y&\ .
\end{CD}
\end{equation*}
By construction, $\alpha$ is also finite and \'{e}tale. Hence we have $\alpha^{\ast}K_{X}=K_{W}$ and
\begin{equation*}
\alpha^{\ast}\mathscr{I}(f,\|K^{m-1}_{X}\|)=\mathscr{I}(g,\|K^{m-1}_{W}\|)
\end{equation*}
by the behaviour of asymptotic multiplier ideals under \'{e}tale covers (c.f. \cite{Laz04b}, Theorem 11.2.16).

By the flat base change theorem \cite{Har77},
\begin{equation*}
\beta^{\ast}R^{i}f_{\ast}(K^{m}_{X}\otimes\mathscr{I}(f,\|K^{m-1}_{X}\|))\simeq R^{i}g_{\ast}(K^{m}_{W}\otimes\mathscr{I}(f,\|K^{m-1}_{W}\|)).
\end{equation*}
Hence we obtain vanishing (\ref{001}) by Theorem \ref{t13}.
\end{proof}

\begin{rem}\label{r41}
A variant of Theorem \ref{t14} can actually recover the following proposition, which is originally obtained in \cite{Shi16}.
\end{rem}

\begin{proposition}[(c.f. \cite{Shi16}, Proposition 4.8)]
Let $f:X\rightarrow Y$ be a morphism from a smooth projective surface $X$ to an abelian variety $Y$. Assume that $\kappa(X)\geqslant0$. Then $R^{i}f_{\ast}(K^{m}_{X})$ are $GV$-sheaves for every $i\geqslant0$ and $m\geqslant1$.
\begin{proof}[Reproof via Theorem \ref{t13}]
The proof is similar to the one in \cite{Shi16}, hence we only sketch it.

We may assume without loss of generality that $\dim f(X)\geqslant1$. Then it is enough to show that $R^{1}f_{\ast}(K^{m}_{X})$ is a $GV$-sheaf. Since $\kappa(X)\geqslant0$, we can take a series of contractions of $(-1)$-curves $\varepsilon:X\rightarrow X^{\prime}$ such that $K_{X^{\prime}}$ is semi-ample, and obtain a natural morphism $f^{\prime}:X^{\prime}\rightarrow Y$ such that $f^{\prime}\circ\varepsilon=f$. Combined with the Leray spectral sequence \cite{Har77}, it is left to prove that
\begin{equation}\label{e42}
H^{q}(Y,f^{\prime}_{\ast}R^{1}\varepsilon_{\ast}(K^{m}_{X})\otimes H^{l})=0
\end{equation}
and
\begin{equation}\label{e43}
H^{q}(Y,R^{1}f^{\prime}_{\ast}(K^{m}_{X^{\prime}})\otimes H^{l})=0
\end{equation}
for every $q>0$ and any ample and globally generated line bundle $H$ on $Y$ with $l$ {\sl large enough}. The vanishing (\ref{e42}) is quite obvious for the reason of dimension, while (\ref{e43}) is a direct consequence of Theorem \ref{t13} since  the semi-ampleness implies that $\mathscr{I}(f^{\prime},\|K^{m-1}_{X^{\prime}}\|)=\mathcal{O}_{X^{\prime}}$.
\end{proof}
\end{proposition}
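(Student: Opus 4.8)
The plan is to peel off the trivial cases, reduce to the single nontrivial sheaf $R^{1}f_{\ast}(K^{m}_{X})$ with $\dim f(X)=1$, pass to a minimal model of $X$ on which the asymptotic multiplier ideal becomes trivial, and then quote Theorem \ref{t14}. First, the case $i=0$ is exactly the theorem of Popa--Schnell that $f_{\ast}(K^{m}_{X})$ is a $GV$-sheaf. If $\dim f(X)=0$, or if $f$ is generically finite, the remaining assertions are vacuous (the relevant $R^{i}f_{\ast}$ vanish for $i\geqslant1$). If $\dim f(X)=1$ the fibres are at most $1$-dimensional, so $R^{i}f_{\ast}(K^{m}_{X})=0$ for $i\geqslant2$. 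Hence only $R^{1}f_{\ast}(K^{m}_{X})$ with $\dim f(X)=1$ has to be treated.

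Next I would use $\kappa(X)\geqslant0$ to run the surface minimal model program: contract the $(-1)$-curves by a birational morphism $\varepsilon:X\rightarrow X'$ onto a minimal (smooth projective) surface $X'$, on which $K_{X'}$ is semi-ample by abundance for surfaces; let $f':X'\rightarrow Y$ be the induced morphism with $f'\circ\varepsilon=f$. From $K_{X}=\varepsilon^{\ast}K_{X'}\otimes\mathcal{O}_{X}(E)$ with $E\geqslant0$ exceptional, the projection formula together with $\varepsilon_{\ast}\mathcal{O}_{X}(mE)=\mathcal{O}_{X'}$ gives $\varepsilon_{\ast}(K^{m}_{X})=K^{m}_{X'}$, while $R^{1}\varepsilon_{\ast}(K^{m}_{X})$ is a coherent sheaf supported on the finite set $\varepsilon(\mathrm{Exc}(\varepsilon))$ (and $R^{q}\varepsilon_{\ast}=0$ for $q\geqslant2$). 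The five-term exact sequence of the Leray spectral sequence $R^{p}f'_{\ast}(R^{q}\varepsilon_{\ast}K^{m}_{X})\Rightarrow R^{p+q}f_{\ast}(K^{m}_{X})$ then yields a short exact sequence
\[
0\longrightarrow R^{1}f'_{\ast}(K^{m}_{X'})\longrightarrow R^{1}f_{\ast}(K^{m}_{X})\longrightarrow\mathcal{Q}\longrightarrow0,
\]
in which $\mathcal{Q}$ is a subsheaf of $f'_{\ast}(R^{1}\varepsilon_{\ast}K^{m}_{X})$, a coherent sheaf with $0$-dimensional support on $Y$.

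Then I would check that each end term is a $GV$-sheaf. Any coherent sheaf on an abelian variety with $0$-dimensional support is a $GV$-sheaf, since its $P_{\alpha}$-twisted cohomology vanishes in all positive degrees, so the loci in Definition \ref{d22} are empty for $q\geqslant1$; hence $\mathcal{Q}$ is $GV$. On the other hand, semi-ampleness of $K_{X'}$ forces the relative base-ideals of $|K^{k(m-1)}_{X'}|$ to be trivial for $k$ sufficiently divisible, so $\mathscr{I}(f',\|K^{m-1}_{X'}\|)=\mathcal{O}_{X'}$; therefore $R^{1}f'_{\ast}(K^{m}_{X'})=R^{1}f'_{\ast}(K^{m}_{X'}\otimes\mathscr{I}(f',\|K^{m-1}_{X'}\|))$ is a $GV$-sheaf by Theorem \ref{t14}. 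Since an extension of $GV$-sheaves is again a $GV$-sheaf (by the long exact sequence in cohomology, as in \cite{PP11}), the displayed sequence shows $R^{1}f_{\ast}(K^{m}_{X})$ is a $GV$-sheaf, which finishes the proof.

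The only real content here is the input from surface theory: abundance for surfaces, which yields semi-ampleness of $K_{X'}$ and hence the triviality of the asymptotic multiplier ideal on the minimal model, together with the observation that the $\varepsilon$-higher direct images are torsion sheaves with finite support and so cannot obstruct the $GV$ property; everything else (the Leray sequence, permanence of $GV$ under extensions, and Theorem \ref{t14}) is formal. If one prefers to avoid invoking Theorem \ref{t14}, the same conclusion follows by feeding Theorem \ref{t13}---applied to $f'$ and to its base changes along finite \'{e}tale covers $Z\to Y$, where the asymptotic multiplier ideal remains trivial---into the Pareschi--Popa cohomological criterion for $GV$-sheaves.
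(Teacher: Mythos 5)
Your proof is correct and follows essentially the same route as the paper: reduce to $R^{1}f_{\ast}(K^{m}_{X})$, pass to the minimal model where semi-ampleness of $K_{X'}$ trivialises the asymptotic multiplier ideal so that Theorem \ref{t14} (equivalently Theorem \ref{t13} fed into the Pareschi--Popa criterion) applies, and dispose of the $\varepsilon$-higher direct images by their $0$-dimensional support; the paper phrases the Leray step as two cohomology vanishings while you phrase it as a short exact sequence of sheaves plus extension-closedness of GV, which is only a cosmetic difference. One small inaccuracy: when $\dim f(X)=0$ or $f$ is generically finite, the sheaves $R^{i}f_{\ast}(K^{m}_{X})$ for $i\geqslant1$ need not vanish (they can be skyscrapers, e.g.\ $H^{i}(X,K^{m}_{X})$ at a point, or supported on the images of contracted curves), but they have $0$-dimensional support and are therefore GV by the same observation you use for $\mathcal{Q}$.
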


Once we know generic vanishing the situation is in fact much better than what we obtained for morphisms to arbitrary varieties.

\begin{proof}[Proof of Corollary \ref{c11}]
In view of \cite{PS14}, Corollary 5.4, everything is immediately verified once we know that $R^{i}f_{\ast}(K^{m}_{X}\otimes\mathscr{I}(f,\|K^{m-1}_{X}\|))$ is a $GV$-sheaf. Hence we omit the proof here.
\end{proof}

\section{Further discussions}
\label{sec:further}
In this section, we prove Theorem \ref{t15} based on results in \cite{Den21,DuM19,Dut20,Iwa20}. More precisely, we will apply the following theorem.

\begin{theorem}[(c.f. \cite{Iwa20}, Theorem 1.4)]\label{t61}
Let $f:X\rightarrow Y$ be a fibration between two projective manifolds with $\dim Y=n$, and $A$ be an ample line bundle on $Y$.
For any integer $m\geqslant1$ and $l\geqslant\frac 1 2 n(n-1)+m(n+1)$, the sheaf
\begin{equation*}
f_{\ast}(K^{m}_{X})\otimes A^{l}
\end{equation*}
is generated by the global sections at a regular value $y$ of $f$.
\end{theorem}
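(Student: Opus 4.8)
The plan is to reduce the assertion to an extension problem across the fibre over $y$ and then to solve it by an Angehrn--Siu type construction of singular metrics on the base, the deeper analytic input being the positivity of the direct image sheaf. First I would fix a regular value $y$, so that $f$ is smooth over a neighbourhood of $y$ and the fibre $X_{y}=f^{-1}(y)$ is a smooth manifold with $K_{X}|_{X_{y}}=K_{X_{y}}$ and $f^{\ast}A^{l}|_{X_{y}}=\mathcal{O}_{X_{y}}$. It is then enough to show that the restriction map
\[
H^{0}(X,K^{m}_{X}\otimes f^{\ast}A^{l})\longrightarrow H^{0}(X_{y},K^{m}_{X_{y}})
\]
is surjective: this map factors through $(f_{\ast}(K^{m}_{X})\otimes A^{l})\otimes k(y)$, and at the regular value $y$ its surjectivity onto $H^{0}(X_{y},K^{m}_{X_{y}})$, combined with cohomology and base change, forces generation of $f_{\ast}(K^{m}_{X})\otimes A^{l}$ at $y$. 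Thus the whole theorem becomes the statement that every pluricanonical form on the fibre $X_{y}$ extends to a global section of $K^{m}_{X}\otimes f^{\ast}A^{l}$ on $X$.

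To solve this extension problem I would transport it to the base. Writing $K^{m}_{X}=K^{m}_{X/Y}\otimes f^{\ast}K^{m}_{Y}$ we obtain $f_{\ast}(K^{m}_{X})\otimes A^{l}=\mathscr{E}\otimes K^{m}_{Y}\otimes A^{l}$, where $\mathscr{E}:=f_{\ast}(K^{m}_{X/Y})$. The crucial external ingredient is that $\mathscr{E}$ carries a positively curved singular Hermitian metric, namely the relative $m$-Bergman kernel metric (Berndtsson for $m=1$, and P\u{a}un--Takayama in general); equivalently, after the twist by $\mathscr{I}(f,\|K^{m-1}_{X}\|)$, which by Proposition \ref{p21} leaves $f_{\ast}(K^{m}_{X})$ unchanged, the sheaf behaves like a weakly positive sheaf to which vanishing theorems apply. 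With this metric in hand the desired surjectivity is precisely global generation of $\mathscr{E}\otimes K^{m}_{Y}\otimes A^{l}$ at the single point $y\in Y$.

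The heart of the argument is then an Angehrn--Siu / Nadel construction on $Y$. Using the ampleness of $A$ and Riemann--Roch on small balls I would produce a section of a large power of $A$ with high multiplicity at $y$ and then cut down: restricting to the log-canonical centre through $y$ and re-applying the construction, I reduce the dimension of the centre step by step from $n-1$ down to $0$ until it is the isolated point $y$. The successive twists required at dimensions $n-1,n-2,\dots,1,0$ accumulate to $1+2+\cdots+(n-1)=\tfrac12 n(n-1)$, which is the origin of that term in the bound, while the remaining $m(n+1)$ is the cost of dominating $K^{m}_{Y}$, mirroring the Popa--Schnell estimate behind Theorem \ref{t11}. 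Once a singular metric $\psi$ on a suitable power $A^{s}$ with an isolated log-canonical singularity at $y$ is built, Nadel vanishing yields $H^{1}(Y,\mathscr{E}\otimes K^{m}_{Y}\otimes A^{l}\otimes\mathscr{I}(\psi))=0$, and since $\mathscr{I}(\psi)$ has $y$ as an isolated point of its cosupport this forces the evaluation at $y$ to be surjective, giving the extension we want. (Alternatively, one could iterate an Ohsawa--Takegoshi extension on $X$ one codimension at a time, extending from the fibre through centres of growing dimension; the accumulated twist again produces the $\tfrac12 n(n-1)$ term.)

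I expect the main obstacle to be twofold. Analytically, Nadel vanishing must be applied to $\mathscr{E}$, which is only torsion-free, not locally free, and is endowed with a \emph{singular} positively curved metric; making this rigorous requires either passing to a resolution on which the metric acquires analytic singularities, or exploiting that we only need generation at a general regular value, where $\mathscr{E}$ is locally free and its metric is controlled. Combinatorially, the delicate point of the Angehrn--Siu step is the tie-breaking that keeps the log-canonical centre isolated at $y$ at each stage, together with the exact bookkeeping of multiplicities needed to land precisely at $\tfrac12 n(n-1)+m(n+1)$ rather than at a cruder bound.
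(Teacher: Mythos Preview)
The paper does not prove Theorem~\ref{t61}; it is quoted verbatim from \cite{Iwa20}, Theorem~1.4, and used only as a black box in the proof of Theorem~\ref{t15}. There is therefore no ``paper's own proof'' to compare your proposal against.

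That said, your outline is essentially the strategy Iwai carries out, so the content is on target. One point of bookkeeping is off, however. You attribute the term $\tfrac12 n(n-1)$ to the Angehrn--Siu cutting-down procedure, summing $1+2+\cdots+(n-1)$. In fact Angehrn--Siu on the $n$-dimensional base costs $1+2+\cdots+n+1=\tfrac12 n(n+1)+1$ (one needs multiplicity $n$ at the first step, not $n-1$), while the positivity of $K_{X}^{m-1}\otimes f^{\ast}A^{(m-1)(n+1)}$ via the relative $m$-Bergman metric absorbs only $(m-1)(n+1)$, not $m(n+1)$. The identity
\[
\tfrac12 n(n+1)+1+(m-1)(n+1)=\tfrac12 n(n-1)+m(n+1)
\]
explains why the bound takes the stated form; your decomposition of it into $\tfrac12 n(n-1)$ plus $m(n+1)$ is a regrouping, not the natural output of the argument. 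Apart from this, and the caveats you yourself flag about applying Nadel vanishing to the singularly-metrised torsion-free sheaf $f_{\ast}(K^{m}_{X/Y})$, the plan is sound and matches the cited reference.
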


The proof of Theorem \ref{t15} then involves the same argument as Theorem \ref{t13} with slightly adjustment.
\begin{proof}[Proof of Theorem \ref{t15}]
(1) Let $\{U_{\alpha}\}$ be a finite local coordinate chart of $Y$. Let $p$ be a divisible and large enough integer which computes $\mathscr{I}(f,\|K^{m-1}_{X}\|)$. Moreover, $f^{\ast}A^{p(n+2)}$ is globally generated and
\begin{equation}\label{002}
p\geqslant\frac 1 2 n(n-1).
\end{equation}
Let $\varphi=\{\varphi_{\alpha}\}$ be the metrics associated with $\mathscr{I}(f,\|K^{m-1}_{X}\|)$. Then $\varphi_{\alpha}$ is a singular metric on $K_{X}|_{f^{-1}(U_{\alpha})}$ of the form that
\begin{equation*}
\varphi_{\alpha}=\frac{1}{p}\log\sum|u_{i,\alpha}|^{2},
\end{equation*}
where $u_{i,\alpha}\in\Gamma(f^{-1}(U_{\alpha}),K^{p}_{X})$. Moreover,
\begin{equation*}
\mathscr{I}((m-1)\varphi)=\mathscr{I}(f,\|K^{m-1}_{X}\|).
\end{equation*}

We write inequality  (\ref{002}) as
\begin{equation*}
p(n+2)\geqslant \frac 1 2 n(n-1)+p(n+1).
\end{equation*}
So by Theorem \ref{t61} the sheaf
\begin{equation*}
f_{\ast}(K^{p}_{X})\otimes A^{p(n+2)}
\end{equation*}
is globally generated on $Y$. Now let $\{s_{j}\}$ be the set of global sections that generates $f^{\ast}A^{p(n+2)}$. Then as is shown in the proof of Theorem \ref{t13}, all of the sections $\{u_{i,\alpha}\otimes s_{j}\}$ extend over $X$ as the sections $\{v_{ij,\alpha}\}$ in
\begin{equation*}
H^{0}(X,K^{p}_{X}\otimes f^{\ast}A^{p(n+2)}).
\end{equation*}
The sections $\{v_{ij,\alpha}\}$, as $i,j,\alpha$ vary, together define a (singular) metric $\chi$ on
\begin{equation*}
K^{p}_{X}\otimes f^{\ast}A^{p(n+2)}
\end{equation*}
with positive curvature current. Then by the same argument as Theorem \ref{t13}, we obtain
\begin{equation*}
\mathscr{I}(p^{-1}(m-1)\chi)=\mathscr{I}(f,\|K^{m-1}_{X}\|).
\end{equation*}

Let
\begin{equation*}
 (F,\varphi_{F})=(K^{m-1}_{X}\otimes f^{\ast}A^{(m-1)(n+2)},p^{-1}(m-1)\chi).
\end{equation*}
As is shown before,
\begin{equation*}
\mathscr{I}(\varphi_{F})=\mathscr{I}(f,\|K^{m-1}_{X}\|).
\end{equation*}
Furthermore, since $l>(m-1)(n+2)$ by hypothesis, $l-(m-1)(n+2)\geqslant1$. So let
\begin{equation*}
N=f^{\ast}A^{l-(m-1)(n+2)}.
\end{equation*}
We have
\begin{equation*}
R^{i}f_{\ast}(K_{X}\otimes F\otimes\mathscr{I}(\varphi_{F})\otimes N)=R^{i}f_{\ast}(K^{m}_{X}\otimes\mathscr{I}(f,\|K^{m-1}_{X}\|))\otimes A^{l}.
\end{equation*}
Then by Theorem \ref{t31} (with the same notation there), we obtain the desired vanishing result.

(2) The strategy is to apply the following injectivity theorem in \cite{Mat14}.

\begin{theorem}[(c.f. \cite{Mat14}, Theorem 1.5)]\label{t52}
Let $(F,\varphi_{F})$ and $(M,\varphi_{M})$ be line bundles with (singular) metrics on a compact K\"{a}hler manifold $X$. Assume the following conditions:
\begin{enumerate}
\item[(a)] There exists a subvariety $Z$ on $X$ such that $\varphi_{F}$ and $\varphi_{M}$ are smooth on $X\setminus Z$;
\item[(b)] $i\Theta_{F,\varphi_{F}}\geqslant\gamma$ and $i\Theta_{M,\varphi_{M}}\geqslant\gamma$ on $X$ for some smooth $(1,1)$-form $\gamma$ on $X$;
\item[(c)] $i\Theta_{F,\varphi_{F}}\geqslant\varepsilon i\Theta_{M,\varphi_{M}}$ for some positive number $\varepsilon>0$.
\end{enumerate}
Then for a (non-zero) section $s$ of $M$ with $\sup_{X}|s|_{\varphi_{M}}<\infty$, the multiplication map induced by the tensor product with $s$
\begin{equation*}
\Phi_{s}:H^{q}(X,K_{X}\otimes F\otimes\mathscr{I}(\varphi_{F}))\rightarrow H^{q}(X,K_{X}\otimes F\otimes M\otimes\mathscr{I}(\varphi_{F}+\varphi_{M}))
\end{equation*}
is (well-defined and) injective for any $q$.
\end{theorem}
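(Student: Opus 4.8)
The plan is to prove this analytic injectivity statement by $L^{2}$ harmonic theory on the non-compact manifold $X\setminus Z$, in the spirit of Enoki and Ohsawa but with the singular weights handled by approximation. First I would observe that $\Phi_{s}$ is well-defined: because $\sup_{X}|s|_{\varphi_{M}}<\infty$, multiplication by $s$ carries a local section $h$ with $|h|^{2}e^{-\varphi_{F}}$ integrable to one with $|sh|^{2}e^{-(\varphi_{F}+\varphi_{M})}=|s|_{\varphi_{M}}^{2}\,|h|^{2}e^{-\varphi_{F}}$ integrable, so that $s\cdot\mathscr{I}(\varphi_{F})\subseteq\mathscr{I}(\varphi_{F}+\varphi_{M})$ and the induced map on cohomology exists. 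Since $Z$ is a proper analytic subset of the compact K\"{a}hler manifold $X$, the open set $X\setminus Z$ carries a complete K\"{a}hler metric $\omega$, and by (a) both weights are smooth there. Working with the $L^{2}$-Dolbeault complex for $\omega$ and the weight $\varphi_{F}$, I would invoke a De Rham--Weil isomorphism identifying $H^{q}(X,K_{X}\otimes F\otimes\mathscr{I}(\varphi_{F}))$ with the space of $F$-valued $L^{2}$ harmonic $(n,q)$-forms; its independence of the complete K\"{a}hler metric then equips each class with a canonical harmonic representative $u$ satisfying $\bar\partial u=0$ and $\bar\partial^{\ast}u=0$.

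The passage through singular metrics requires regularization, and this is where condition (b) enters. The lower curvature bound $i\Theta_{F,\varphi_{F}}\geqslant\gamma$ by a smooth form is exactly what makes Demailly's equisingular approximation available: I would replace $\varphi_{F}$ by a decreasing sequence of metrics that are smooth on $X\setminus Z$, whose multiplier ideals agree with $\mathscr{I}(\varphi_{F})$ for small parameter, and whose curvatures stay bounded below. This reduces all curvature computations to the smooth setting on $X\setminus Z$ while keeping control of the multiplier ideal and of the harmonic representatives in the limit.

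The core is an Enoki-type harmonic argument. Let $u$ be the harmonic representative of a class $\alpha$ with $\Phi_{s}(\alpha)=0$, so that $su=\bar\partial w$ for some $w$ in the target $L^{2}$ complex. Since $s$ is holomorphic and $\bar\partial u=0$, one has $\bar\partial(su)=0$, and the bound $\sup_{X}|s|_{\varphi_{M}}<\infty$ guarantees $su\in L^{2}$ for the weight $\varphi_{F}+\varphi_{M}$. Feeding $u$ into the Bochner--Kodaira--Nakano identity, the harmonicity of $u$ together with the semi-positivity of $i\Theta_{F,\varphi_{F}}$ forces the curvature term $[i\Theta_{F,\varphi_{F}},\Lambda]u$ to vanish almost everywhere; condition (c), which reads $i\Theta_{F,\varphi_{F}}\geqslant\varepsilon\,i\Theta_{M,\varphi_{M}}$, then sandwiches the corresponding $M$-term between $0$ and $\varepsilon^{-1}$ times a vanishing quantity, so it vanishes as well. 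Consequently $\bar\partial^{\ast}(su)=0$, i.e. $su$ is itself harmonic for $(F\otimes M,\varphi_{F}+\varphi_{M})$. A form that is simultaneously harmonic and $\bar\partial$-exact must be zero, so $su=0$; as $s\not\equiv0$ this gives $u\equiv0$ off $Z\cup\{s=0\}$ and hence everywhere by unique continuation, whence $\alpha=0$ and $\Phi_{s}$ is injective.

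The main obstacle will not be the formal Enoki computation but the analytic bookkeeping across the singularities of the weights. Establishing the De Rham--Weil isomorphism with the multiplier ideal $\mathscr{I}(\varphi_{F})$, producing the harmonic representative as a genuine $L^{2}$ form, and passing from the regularized metrics back to $\varphi_{F}$ all have to be controlled by uniform $L^{2}$ estimates for the $\bar\partial$-equation together with a diagonal limiting argument; it is precisely here that the completeness of $\omega$ on $X\setminus Z$ and the lower curvature bound in (b) are indispensable. Once the harmonic representation and the boundedness-driven finiteness of $\|su\|$ are secured, conditions (b) and (c) feed the Bochner inequality directly and the injectivity of $\Phi_{s}$ follows.
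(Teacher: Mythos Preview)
The paper does not prove this statement at all: Theorem~\ref{t52} is quoted verbatim from \cite{Mat14}, Theorem~1.5, and is used as a black box in the proof of Theorem~\ref{t15}(2). There is therefore no ``paper's own proof'' to compare against; everything you wrote is an attempt to reprove Matsumura's result rather than something the present paper undertakes.

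That said, your sketch follows the correct overall architecture of Matsumura's argument (complete K\"ahler metric on $X\setminus Z$, $L^{2}$ De~Rham--Weil isomorphism with multiplier ideals, Demailly equisingular regularization justified by (b), Enoki-type Bochner computation), but there is a genuine gap at the heart of your Bochner step. You write that ``the harmonicity of $u$ together with the semi-positivity of $i\Theta_{F,\varphi_{F}}$ forces the curvature term $[i\Theta_{F,\varphi_{F}},\Lambda]u$ to vanish almost everywhere.'' The hypotheses do \emph{not} give $i\Theta_{F,\varphi_{F}}\geqslant 0$; condition~(b) only bounds the curvature below by a smooth form $\gamma$, which may well be negative somewhere. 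So from $\|\bar\partial u\|^{2}+\|\bar\partial^{\ast}u\|^{2}=0$ you cannot conclude that the pointwise curvature term vanishes, and hence your sandwich argument via (c) to kill the $M$-curvature term does not go through as stated. In \cite{Mat14} this is exactly the delicate point: Matsumura does not argue via global semi-positivity of $\Theta_{F}$ but instead exploits (c) by comparing the $\bar\partial^{\ast}$-operators for the weights $\varphi_{F}$ and $\varphi_{F}+\varphi_{M}$ directly, together with a careful limiting argument through the regularized metrics, to show $\bar\partial^{\ast}_{\varphi_{F}+\varphi_{M}}(su)=0$ without ever needing $[i\Theta_{F},\Lambda]u=0$ pointwise. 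Your outline would need to be reworked at precisely this step to match the actual hypotheses.
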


From (1) we know that
\begin{equation*}
(K^{m-1}_{X}\otimes f^{\ast}A^{(m-1)(n+2)},p^{-1}(m-1)\chi)
\end{equation*}
is a Hermitian line bundle with positive curvature current and satisfies
\begin{equation*}
\mathscr{I}(p^{-1}(m-1)\chi)=\mathscr{I}(f,\|K^{m-1}_{X}\|).
\end{equation*}
In particular, $\chi$ is smooth outside a subvariety. Let $\psi$ be a smooth metric on $f^{\ast}A$ with positive curvature. Let
\begin{equation*}
 (F,\varphi_{F})=(K^{m-1}_{X}\otimes f^{\ast}A^{l},p^{-1}(m-1)\chi+(l-(m-1)(n+2))\psi)
 \end{equation*}
and
\begin{equation*}
(M,\varphi_{M})=(f^{\ast}A^{j},j\psi).
\end{equation*}
Since $l>(m-1)(n+2)$ by hypothesis, $l-(m-1)(n+2)\geqslant1$. Therefore let $\varepsilon=\frac{1}{j}$, we have
\begin{equation*}
i\Theta_{F,\varphi_{F}}\geqslant\varepsilon i\Theta_{M,\varphi_{M}}\geqslant0.
\end{equation*}
Moreover, since $\psi$ is smooth,
\begin{equation*}
\mathscr{I}(\varphi_{F}+\varphi_{M})=\mathscr{I}(\varphi_{F})=\mathscr{I}(f,\|K^{m-1}_{X}\|).
\end{equation*}
So the proof is finished by directly applying Theorem \ref{t52}.
\end{proof}

\begin{acknowledgements}
The authors would like to thank Prof. Mihnea Popa for pointing out the counterexample in \cite{Shi16} which shows that the higher direct images to abelian varieties are not necessarily $GV$.
\end{acknowledgements}

\end{document}